\newtheorem{theorem}{Theorem}[section]
\newtheorem{proposition}[theorem]{Proposition}
\newtheorem{corollary}[theorem]{Corollary}
\theoremstyle{definition}
\newtheorem{definition}[theorem]{Definition}
\newtheorem{remark}[theorem]{Remark}
\newtheorem*{example}{Example}
\def\gor#1{\widetilde{#1}}
\def\tech#1{\widehat{#1}}
\def\bar#1{\overline{#1}}
\def\c{\,\circ}
\def\h(#1,#2){\mbox{Hom}(#1,#2)}
\def\t(#1,#2){\mbox{Tor}(#1,#2)}
\def\e(#1,#2){\mbox{Ext}(#1,#2)}
\def\N{\mathbb{N}}
\def\Z{\mathbb{Z}}
\def\R{\mathbb{R}}
\def\Q{\mathbb{Q}}
\def\C{\mathbb{C}}
\def\F{\mathbb{F}}
\def\T{\mathbb{T}}
\def\u{\mathbf{u}}
\def\v{\mathbf{v}}
\def\x{\mathbf{x}}
\def\y{\mathbf{y}}
\def\dgm{\textsf{dgm}}
\def\pers{\textsf{pers}}
\def\birth{\textsf{birth}}
\def\death{\textsf{death}}
\def\<{\langle}
\def\>{\rangle}
\newcommand{\Mat}[1]{{\begin{bmatrix}#1\end{bmatrix}}}
\title[Sliding Windows and Persistence]
{Sliding Windows and Persistence:  \\
An Application of Topological Methods to Signal Analysis}
\author[Jose Perea]{Jose A. Perea\;(\href{mailto:joperea@math.duke.edu}{\Letter})\;}
\address{Department of Mathematics,
         Duke University, Durham,
         North Carolina, USA.}
         \thanks{(\href{mailto:joperea@math.duke.edu}{\Letter}) Corresponding author. Email: {\tt joperea@math.duke.edu}. Phone: \texttt{+}\,1\;(919)\;660\,\texttt{-}\,2837.}
\email{joperea@math.duke.edu}
\author{John Harer}
\address{Departments of Mathematics, Computer Science
        and Electrical and Computer Engineering,
        Duke University, Durham, North Carolina, USA.}
        \thanks{Both authors were supported in part
        by DARPA under grants D12AP00001, D12AP00025-002,
        \indent and by the AFOSR under grant FA9550-10-1-0436.}
\email{john.harer@duke.edu}
\subjclass[2000]{Primary 55U99, 37M10, 68W05; Secondary 57M99}
\date{November 22$^{nd}$, 2013}
\keywords{Computational algebraic topology, algorithms}
\begin{document}

\begin{abstract}

We develop in this paper a theoretical framework for the topological study of time series data.
Broadly speaking, we describe  geometrical  and topological properties
of  sliding window embeddings, as seen through the lens of persistent homology.
In particular, we show that maximum persistence at the point-cloud level can be used to quantify periodicity
at the signal level, prove structural and convergence theorems for the resulting persistence diagrams, and
derive estimates for their dependency on window size and embedding dimension.
We apply this methodology  to quantifying periodicity in synthetic
data sets, and compare the results with those obtained
using state-of-the-art methods in gene expression analysis.
We call this new method {\bf SW1PerS} which stands for
Sliding Windows and $1$-dimensional Persistence Scoring.

\end{abstract}

\maketitle

\section{Introduction}

Signal analysis is an enormous field.
There are many methods to study signals and many applications of
that study.
Given its importance, one might conclude that there is
little opportunity left for the development of totally new approaches to
signals.
Yet in this paper we provide a new way to find periodicity and quasi-periodicity
in signals.
The method is based on sliding windows (also known as time-delay reconstruction),
which have been used extensively
in both engineering applications and in dynamical systems.
But it adds a new element not applied before, which comes from
the new field of computational topology \cite{EH10}.

Persistent homology is a topological method for measuring the shapes
of spaces and the features of functions.
One of the most important applications of persistent homology is to
point clouds \cite{carlsson2009topology}, where shape is usually interpreted as the geometry of
some implicit underlying object near which the point cloud is sampled.
The simplest non-trivial example of this idea is a point cloud which has
the shape of a circle, and this shape is captured with $1$-dimensional
persistence.
The challenge in applying the method is that noise can reduce the
persistence, and not enough points can prevent the circular shape
from appearing.
It's also a challenge to deal with the fact that features come on all
scale-levels and can be nested or in more complicated relationships.
But this is what persistent homology is all about.

The idea of applying $1$D persistence to study time series arose
in our study of gene expression data \cite{DAOHHH12, DPH12}.
The first of these papers studied a variety of existing methods for
finding periodicity in gene expression patterns.
The motivation of that work was the search for gene regulatory networks
(more precisely, possible nodes of gene regulatory networks) that
control periodic processes in cells such as the cell division cycle,
circadian rhythms, metabolic cycles and periodic patterning in
biological development (lateral roots and somites).
The methods studied in \cite{DAOHHH12} were derived from a
number of fields including astronomy, geometry, biology and
statistics, and all were based on a direct study of the underlying
signal in either physical or frequency space.
The most successful methods are based  on finding
cosine-like behavior, a rather limited definition of periodicity.

In this paper and in \cite{DPH12} we look instead at the shape of
the sliding window point cloud, a totally different approach.
Of course the geometry of point clouds derived from other kinds
of data like images has been studied before
\cite{carlsson2008local, kantz2003},
but the current approach is quite different.
Our method understands periodicity as repetition of patterns, whatever these
may be, and quantifies this reoccurrence as the degree of
circularity/roundness in the generated point-cloud.
Thus, it is fundamentally agnostic.

\subsection{Previous Work}

The sliding window, or time-delay embedding, has been used mostly
in the study of dynamical systems to understand the nature of their attractors.
Takens' theorem  \cite{T81}  gives  conditions under which a smooth attractor
can be reconstructed from the observations of a function,
with bounds related to those of the Whitney Embedding Theorem.
This methodology has in turn been employed to test for non-linearity and
chaotic behavior in the dynamics of
ECG-EKG, EEG and MEG \cite{Stam05, Hundewale12}.
%

It has been recently demonstrated by de Silva et. al.
\cite{de-silva2012Topological}
 that combining time-delay embeddings with topological methods provides a
framework for parametrizing periodic systems.

Kantz and Schreiber  provide in \cite[Chapter~1]{kantz2003} a good source of examples of time delay embeddings used in real-world data sets.

\subsection{Our Contribution}
In the above applications, little of the topology and none of the geometry of the resulting
sliding window embedding has ever been used.
The novelty of our approach lies in our use of this geometry and topology through persistent homology.
We make this possible by showing that
maximum persistence, as a measure of ``roundness'' of the point-cloud,
occurs when the window size corresponds to the natural frequency of the signal.
 This means that $1$D persistence is an effective quantifier of periodicity and quasi-periodicity
 and can be used to infer properties of the signal.

\subsection{Outline}
In section \ref{motivation} we show a motivating example to illustrate our perspective.
In section \ref{background} we give a general introduction to persistent homology.
More on this topic can be found in \cite{EH10}.
In section \ref{approximation} we show
that sliding windows behave well under approximations, and give
explicit estimates at the point-cloud level.
Section \ref{trig} is devoted to studying the geometric structure of
sliding window embeddings from truncated Fourier series,
as well as their dependency on embedding dimension and window size.
In section \ref{secPersHomolTrunc} we prove results describing the
structure of persistent diagrams from sliding window embeddings.
We present in section \ref{secExamples} some examples of how our method
applies in the problem of quantifying  periodicity in time
series data.

\section{Definitions and Motivation}
\label{motivation}

Suppose that $f$ is a function defined on an interval of the real numbers.
Choose an integer $M$ and a real number $\tau$, both greater than $0$.
The sliding window embedding of $f$ based at $t \in \R$
 into $\R^{M+1}$ is the point
 \[
 SW_{M,\tau}f (t) = \Mat{f(t) \\ f(t + \tau) \\ \vdots \\ f(t + M\tau)}.
 \]
Choosing different values of $t$ gives a collection of points called a
{\bf sliding window point cloud for $f$}.
A critical parameter for this embedding is the {\bf window-size} $M\tau$.

\subsection{Motivation:}\label{secMotivation}
To motivate the approach we take in the paper,
let us begin with the following example.

\begin{example} Let $L\in \N$ and $f(t) = \cos(Lt)$.
Then
\begin{eqnarray*}
SW_{M,\tau} f(t) &=& \Mat{\cos(Lt) \\ \cos(Lt + L\tau) \\ \vdots \\ \cos(Lt + ML\tau)} \\ \\
&=& \cos(Lt) \Mat{1 \\ \cos(L\tau) \\ \vdots \\ \cos(LM\tau)} - \sin(Lt)\Mat{0 \\
\sin(L\tau)\\ \vdots \\ \sin(LM\tau)} \\ \\ &=& \cos(Lt)\u - \sin(Lt)\v
\end{eqnarray*}
and therefore $t \mapsto SW_{M,\tau} f(t)$ describes
a planar curve in $\R^{M+1}$, with winding number $L$, whenever $\u$
and $\v$  are linearly independent.
One can in fact
see how the shape of this curve changes as a function of $L$, $M$ and $\tau$.
Indeed, let \[A = \Mat{\|\u\|^2 & -\<\u,\v\> \\ \\ -\<\u,\v\>
& \|\v\|^2}\] which can be computed using  Lagrange's trigonometric formulae
\begin{eqnarray*}
\<\u,\v\> &=& \frac{1}{2}\sum_{m=0}^M \sin(2Lm\tau) = \frac{\sin(L(M+1)\tau) \sin(LM\tau)}{2\sin(L\tau)} \\ \\
\|\u\|^2 - \|\v\|^2 &=& \sum_{m=0}^M \cos(2Lm\tau) = \frac{\sin(L(M+1)\tau)\cos(LM\tau)}{\sin(L\tau)} \\ \\
\|\u\|^2 + \|\v\|^2 &=& M+1.
\end{eqnarray*}
It follows that $A$ is positive semi-definite (both its determinant
and trace ar non-negative).
This means the eigenvalues of $A$ are non-negative and real:
 $\lambda_1\geq\lambda_2\geq 0 $,
 and there is a  $2\times 2$ orthogonal matrix $B$  so that
 \[
 A = B^T\Lambda^2 B\quad \mbox{ where }\quad \Lambda = \Mat{\sqrt{\lambda_1} &
0 \\ 0 & \sqrt{\lambda_2}}.
\]
Therefore, if $\x(t) = \Mat{\cos(Lt) & \sin(Lt)}^\prime$  (here $\prime$ denotes transpose) then

\begin{eqnarray*}
\|SW_{M,\tau} f(t)\|^2 &=& \left \|\Mat{| &  | \\ \u & -\v \\ |   &| }\x(t)\right\|^2 \\ \\
&= &\big\<\x(t),A \;\x(t)\big\> \\ \\
&= & \big\<\Lambda B \; \x(t), \Lambda B \;\x(t)\big\>.
\end{eqnarray*}
Since $B$ is a rotation matrix, say by an angle $\alpha$,
then  the map
 \[SW_{M,\tau}f(t) \mapsto \Mat{\sqrt{\lambda_1} \cos(Lt +\alpha) \\
\\ \sqrt{\lambda_2} \sin(Lt + \alpha)} \] is an isometry.

In summary,  for $f(t) = \cos(Lt)$, the embedding
$t\mapsto SW_{M,\tau} f(t)$ describes an ellipse on the
plane $Span\{\u,\v\}$, whose shape (minor and major axes) is
determined by the square roots of the eigenvalues of $A$.
These eigenvalues can be computed explicitly as
\begin{eqnarray*}
\lambda_1 &=& \frac{(M+1) + \left|\frac{\sin(L(M+1)\tau)}{\sin(L\tau)}\right|}{2} \\ \\
\lambda_2 &=& \frac{(M+1) - \left|\frac{\sin(L(M+1)\tau)}{\sin(L\tau)}\right|}{2}.
\end{eqnarray*}  It follows that the ellipse is roundest when
$\lambda_2$ attains its maximum, which occurs if and only if
$L(M+1)\tau \equiv 0 \mod \pi$.
One such instance is \[M\tau = \left(\frac{M}{M+1}\right) \frac{2\pi}{L}\]
which  is when the window-size approximates the length of the period of $f(t)$.
In other words, the roundness of the sliding window point cloud for $f(t)=\cos(Lt)$
is maximized when the window-size is close to resonating with its natural frequency.

\end{example}
The previous example provides the following intuition: For a generic
function $f$, the degree to which the image of $SW_{M,\tau}f$ traces
a closed curve in $\R^{M+1}$ reflects how periodic $f$ is.
Moreover, if $f$ is periodic, then the roundness of $SW_{M,\tau}f$
defined as the largest radius of a ball in $\R^{M+1}$ so that the curve
\[
t\mapsto SW_{M,\tau}f(t)
\]
 is tangent to at least two points of
its equator, is maximized when the window-size $M\tau$ approaches
the period length.
The goal of this paper is to understand these relations.
\newpage

The geometry of the curve $t\mapsto SW_{M,\tau}f(t)$ can be quite
complicated, as shown in figure \ref{figSynthFunction}.
\begin{figure}[H]
\centering
\includegraphics[scale = .47]{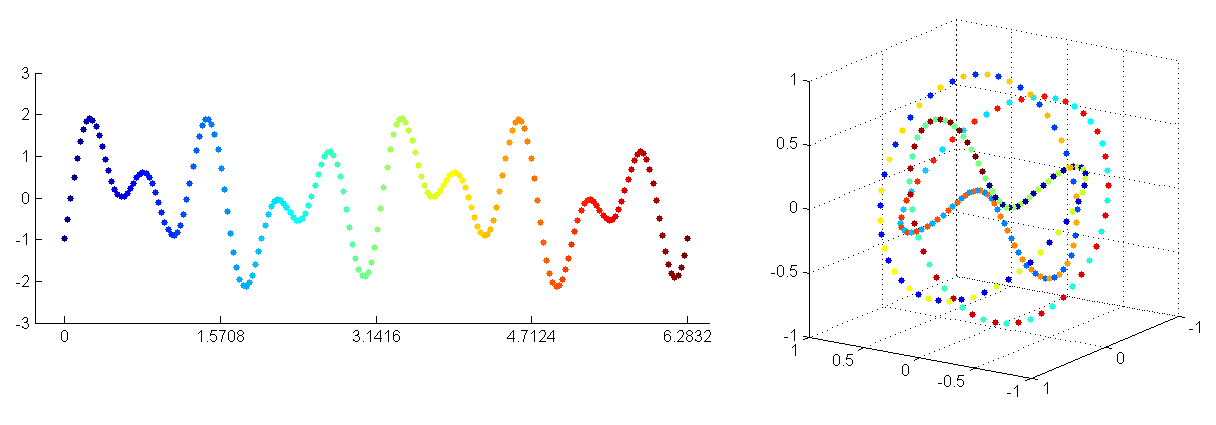}
\caption{From a periodic function to its sliding window point cloud. \textbf{Left:} A periodic function $f$. \textbf{Right:} Multidimensional scaling into $\R^3$ for $SW_{20,\tau} f$.  For each $t$, we use the same color for $f(t)$ and $SW_{20,\tau}f(t)$. Please refer to an electronic version for colors.}
\label{figSynthFunction}
\end{figure}
\noindent The 1-dimensional persistence
diagram for the Vietoris-Rips filtration on a finite sample
$\{SW_{M,\tau}f(t_1),\ldots, SW_{M,\tau} f(t_S)\}$, on the other hand,
is readily
computable \cite{javaplex, zomorodian2005computing} and its maximum persistence is a measure of
roundness as defined in the previous paragraph. We will review in section
\ref{background} the basic concepts behind persistent homology, and devote
the rest of the paper to understanding how the geometry of $SW_{M,\tau} f$
reflects properties of $f$ such as periodicity and period.

\subsection{Approach:}
With this motivation in mind, we now describe our approach: As we
have seen, understanding the algebraic properties of trigonometric
functions allows one to characterize the geometry of $SW_{M,\tau} f$
when $f$ is a trigonometric polynomial. This understanding, in turn,
can be bootstrapped using Fourier analysis and stability of persistence
diagrams, in an approximation step towards $SW_{M,\tau}$ of a
generic periodic function. In what follows, we will establish the
appropriate continuity results for approximation, as well as the necessary
structural results for persistence diagrams from sliding
window point clouds.

\section{Background: Persistent Homology}
\label{background}

In this section we define the key concepts that underlie the theory
of persistent homology for filtered simplicial complexes.
We give a terse introduction to simplicial homology, but more information
can be found in \cite{Hat02,Mun84}.

\subsection{Homology of Simplicial Complexes}

Let $K$ be a simplicial complex and $p$ a prime number.
Recall that this means that $K$ is a finite set of simplices that is closed
under the face relation and that two simplices of $K$
are either disjoint or intersect in a common face.
Let  $\F_p $ be the finite field with $p$ elements,
the $\F_p $ vector space generated by the
$k$-dimensional simplices of $K$ is denoted $C_k(K)$.
It consists of all \emph{$k$-chains}, which are finite formal sums
$c  =  \sum_j \gamma_j x_j$, with $\gamma_j \in  \F_p $
and each $x_j$ a $k$-simplex in $K$.
The boundary $\partial(x_j)$ is the alternating formal sum of the
$(k-1)$-dimensional faces of $x_j$ and the boundary of the chain $c$
is obtained by extending $\partial$ linearly
\begin{eqnarray*}
  \partial(c)  &=&  \sum_j  \gamma_j \partial(x_j).
\end{eqnarray*}

It is not difficult to check that $\partial \circ \partial = \partial^2 = 0$.
The $k$-chains that have boundary $0$ are called \emph{$k$-cycles};
they form a subspace $Z_k$ of $C_k$.
The $k$-chains that are the boundary of $(k+1)$-chains are
called \emph{$k$-boundaries} and form a subspace $B_k$ of $C_k$.
The fact that $\partial^2 = 0$ tells us that $B_k \subset Z_k$.
The quotient group $H_k (K) = Z_k / B_k$
is the \emph{$k$-th simplicial homology group}
of $K$ with $\F_p$-coefficients.
The rank of $H_k (K)$ is the \emph{$k$-th mod $p$ Betti number} of $K$ and
is denoted $\beta_k (K)$.
Since the prime $p$ will be clear from the context, we do not include it in
the notation.

When we have two simplicial complexes $K$ and $K'$, a
\emph{simplicial map} $f: K \to K'$ is a continuous map that
takes simplices to simplices and is linear on each.
A simplicial map induces a homomorphism on homology,
$f_*: H_k(K) \to H_k(K')$,
and homotopic maps induce the same homomorphism.
Homotopy equivalences of spaces induce isomorphisms on homology.
The simplicial approximation theorem tells us that
a continuous map of simplicial complexes can be approximated by a
simplicial map, so that it makes sense to talk about continuous
maps inducing homomorphisms on homology.

\subsection*{Persistence}
We next define persistence, persistent homology and the persistence diagram
 for a simplicial complex $K$.
A \emph{subcomplex} of $K$ is a subset of its simplices that is  closed
under the face relation.
A \emph{filtration} of $K$ is a nested sequence of subcomplexes
that starts with the empty complex and ends with the complete complex,
\[
  \emptyset = K_0 \subset K_1 \subset \ldots \subset K_m = K .
\]
A homology class $\alpha$ is \emph{born} at $K_i$ if it is not in the image
of the map induced by the inclusion $K_{i-1} \subset K_i$.
If $\alpha$ is born at $K_i$, we say that it \emph{dies entering} $K_j$
if the image of the map induced by $K_{i-1} \subset K_{j-1}$
does not contain the image of $\alpha$ but the image of the map
induced by $K_{i-1} \subset K_j$ does.
The \emph{persistence} of $\alpha$ is $j-i$.

We code birth and death information in the persistence diagrams,
one for each dimension.
The diagram $\dgm(k)$ has a point $(i,j)$ for every $k$-homology
class that is born at $K_i$ and dies entering $K_j$.
For most of the paper the homological dimension $k$ will  be clear from the context
or unimportant for the discussion.
To ease notation we will simply write $\dgm$ instead of $\dgm(k)$, and let
$\dgm_1$, $\dgm_2$ denote two $k$-persistence diagrams to be compared.
Sometimes we have a function $h$ that assigns a height or distance
to each sub complex $K_i$, and in that case we use the pair $(h(i),h(j))$.
Each diagram is now a multiset since classes can be born simultaneously
and die simultaneously.
We adjoin the diagonal $\Delta = \{(x,x):  x\geq 0 \}$ to each diagram,
and endow each point $(x,x)\in \Delta$ with countable multiplicity.

The Bottleneck distance between two persistence diagrams
$\dgm_1$ and $ \dgm_2$ is defined by
\[
d_B(\dgm_1,\dgm_2) =
\inf_{\phi} \sup_{x\in \dgm_1}  ||x - \phi(x)||_{\infty}
\]
where the infimum  is taken over all bijections $\phi:\dgm_1 \to \dgm_2$.
Note that such $\phi$ exist even if the number of points of $\dgm_1$ and $\dgm_2$ are
different since we have included the diagonal.

\subsection*{Rips Complex}
Let $X \subset \R^n$ be a compact set, for example a finite point cloud.
We define $d_X(y)$ to be the distance from the point $y \in \R^n$ to $X$.
We are interested in how the homology of the sub-level sets $X_r = d_X^{-1}([0,r])$
changes as we increase $r$.
To make this computationally feasible, we replace the continuous family
of spaces $X_r$ with a discrete family of approximations called the Rips complexes
defined as follows.
Fix $r \geq 0$, $R_r(X)$ is the simplicial complex whose vertices are the points of
$X$ and whose $k$-simplices are the  $k+1$ tuples $[x_0, \cdots, x_k]$ such that
the pairwise distances $||x_i - x_j||$ are less than or equal to $r$ for all $0 \leq i < j \leq k$.
Note that the edges determine the simplices of $R_r(X)$, a higher dimensional simplex
is added if and only if all its edges have been added,
and that the Rips construction makes sense for any metric space.

Since $R_r(X) \subset R_s(X)$ whenever $r < s$, the Rips complexes
form a filtration of $R_{\infty}$, which denotes the largest simplicial complex
having $X$ as its vertex set.
Changes occur at the finite set of $r$ values that are pairwise distances between points,
so we can work with just these $r_j$ to get the filtration
\[
X = R_0 \subset R_1 \subset \cdots \subset R_m,
\]
where $R_j(X) = R_{r_j}(X)$ and $R_{m} = R_{\infty}$.
We will use this filtered complex to study the persistence and the
persistence diagrams of the point cloud $X$.
We thus denote by $\dgm(X)$ the persistence diagram of the homology
filtration induced from the Rips filtration on $X$, where we use homology with
coefficents in $\F_p$.

A key property of persistence is that it is \emph{stable} \cite{CEH}.
In our context this means that if $X,Y$ are two point clouds and
$d_{H}$, $d_{GH}$ are the Hausdorff and Gromov-Hausdorff distances,
then

\begin{equation}\label{eqStability}
d_B(\dgm(X),\dgm(Y)) \leq 2d_{GH}(X,Y) \leq 2d_H(X,Y).
\end{equation}


\section{The Approximation Theorem}
\label{approximation}

In this section we show that one can study $SW_{M,\tau} f$  and the persistence of the point
cloud it generates for a generic function $f\in L^2(\T = \R / 2\pi \Z)$,
by using its Fourier Series approximation.
While it seems quite difficult to study $SW_{M,\tau} f$ directly,  it is not hard to understand
$SW_{M,\tau} \cos(nt)$ and  $SW_{M,\tau} \sin(nt)$,
so we will build our understanding of
the geometry of a general $SW_{M,\tau} f$ from these special cases using the
Fourier series of $f$.
To do this we will need to show that $SW_{M,\tau}$ behaves well under approximations
and that these  approximations work in the context of stability for persistence diagrams.

Let $C(X,Y)$ denote the set of continuous functions from $X$ to $Y$ equipped with the sup norm.
The sliding window embedding induces a mapping
\[
SW_{M,\tau}: C(\T,\R) \longrightarrow C(\T,\R^{M+1}).
\]
The first fact about this map that we need is the following:
\begin{proposition}\label{propSWisContinuous} Let $\T = \R / 2\pi \Z$.
Then for all $M\in \N$ and $\tau >0$, the mapping
 $SW_{M,\tau}: C(\T,\R) \longrightarrow C(\T,\R^{M+1})$
is a bounded linear operator with norm $\|SW_{M,\tau}\| \leq \sqrt{M+1}$.
\end{proposition}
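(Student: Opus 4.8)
The plan is to verify the three ingredients of the statement in turn: linearity, well-definedness (that the image actually lands in $C(\T,\R^{M+1})$), and the norm estimate. None of these is deep; the whole proposition is a short computation once the norm conventions are fixed, namely the Euclidean norm on the fibre $\R^{M+1}$ (matching the usage $\|SW_{M,\tau}f(t)\|^2$ in the motivating example) and the sup norm on both function spaces.

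First I would dispatch \emph{linearity}, which is immediate from the definition. For each $m$ the $m$-th coordinate of $SW_{M,\tau}f$ is the shifted evaluation $t \mapsto f(t+m\tau)$, and this depends linearly on $f$; hence for scalars $a,b$ and $f,g \in C(\T,\R)$ one has $SW_{M,\tau}(af+bg) = a\,SW_{M,\tau}f + b\,SW_{M,\tau}g$ coordinatewise, with no computation needed. Next I would check \emph{well-definedness}: for continuous $f$ each map $t \mapsto f(t+m\tau)$ is continuous on $\T$, since translation by $m\tau$ is a homeomorphism of $\T$, so the vector-valued function $SW_{M,\tau}f$ is continuous and indeed belongs to $C(\T,\R^{M+1})$.

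The remaining point is the \emph{norm bound}. Fixing $t \in \T$ and bounding each coordinate by $|f(t+m\tau)| \leq \|f\|_\infty$, I would estimate
\[
\|SW_{M,\tau}f(t)\|^2 \;=\; \sum_{m=0}^M f(t+m\tau)^2 \;\leq\; (M+1)\,\|f\|_\infty^2 .
\]
Taking square roots and then the supremum over $t \in \T$ gives $\|SW_{M,\tau}f\|_\infty \leq \sqrt{M+1}\,\|f\|_\infty$, which is exactly the asserted operator-norm bound $\|SW_{M,\tau}\| \leq \sqrt{M+1}$.

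I do not expect any genuine obstacle here; the only thing requiring care is consistency of the norm conventions between the two function spaces and the underlying $\R^{M+1}$. If desired, one can observe that the bound is tight on constant functions, for which $\|SW_{M,\tau}f(t)\| = \sqrt{M+1}\,|f|$ identically in $t$, so in fact $\|SW_{M,\tau}\| = \sqrt{M+1}$; but the proposition only claims the inequality, so the estimate above already suffices.
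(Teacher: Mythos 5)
Your proof is correct and follows essentially the same route as the paper: linearity is read off from the definition, and the norm bound comes from the coordinatewise estimate $\|SW_{M,\tau}f(t)\|^2 = \sum_{m=0}^M |f(t+m\tau)|^2 \leq (M+1)\|f\|_\infty^2$. Your additional checks (continuity of the image, tightness on constants) are fine but not needed beyond what the paper records.
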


\begin{proof}
Linearity of $SW_{M,\tau}$ follows directly from its definition.
To see that it is bounded, notice that for
every $f\in C(\T,\R)$ and  $t \in \T$ we have

\begin{eqnarray*}
\|SW_{M,\tau} f(t)\|^2_{\R^{M+1}} &=& |f(t)|^2 + |f(t+\tau)|^2 + \cdots + |f(t+M\tau)|^2 \\ \\
&\leq& (M+1)\|f\|^2_{\infty}
\end{eqnarray*}

\end{proof}

We now consider approximating a function $f$ by its Fourier polynomials and study how the
sliding windows behave in this context.
In particular, let

\[
f(t) = S_N f(t) + R_N f (t)\]
where
\[
 S_N f(t) =
 \sum_{n=0}^N a_n \cos(nt) + b_n \sin(nt) =
 \sum_{n = -N}^N \tech{f}(n) e^{int}
\]
 is the $N$-truncated Fourier series expansion of $f$, $R_N f$ is the remainder, and

\begin{equation}\label{eqRealFourCoeff}
 \tech{f}(n) = \left\{ \begin{array}{ll}
 \frac{1}{2} a_n - \frac{i}{2} b_n &\mbox{ if $n>0$,} \\ \\
 \frac{1}{2} a_{-n} + \frac{i}{2} b_{-n} &\mbox{ if $n<0$,} \\ \\
  a_0 &\mbox{ if $n=0$. }
       \end{array} \right.
\end{equation}
We can easily compute that
\[
SW_{M,\tau}f (t)= \sum\limits_{n=0}^N \cos(nt)\big(a_n \u_n + b_n \v_n\big) + \sin(nt)\big(b_n \u_n - a_n \v_n\big)+ SW_{M,\tau} R_N f(t)
\]
where
\[
\u_n = SW_{M,\tau} \cos(nt)\big|_{t = 0}\;\; \mbox{ and } \; \v_n = SW_{M,\tau}\sin(nt)\big|_{t = 0}.
\]
The vectors $\u_n$ and $\v_n$
form a fundamental basis out of which we can build our understanding
of the structure of the point clouds that sliding windows create.
We  introduce the notation:
\[
\phi_\tau(t) = \sum\limits_{n=0}^N \cos(nt)\big(a_n \u_n + b_n \v_n\big) + \sin(nt)\big(b_n \u_n - a_n \v_n\big),
\]
for the sliding window embedding for $S_N f(t)$.
Also, when $f,M$ and $N$ are clear from the context
we will simply write $\phi_\tau = SW_{M,\tau} S_N f$.

The next step is to find a bound on
the term $SW_{M,\tau} R_N f(t)$.
We will actually find a series of bounds, one for
each of the derivatives $f^{(k)} = \frac{d^k f}{dt^k}$, whenever they exist
and are continuous.

\begin{proposition}\label{propHausBound}
Let $k\in \N$. If   $f\in C^k(\T,\R)$ then for all $t\in \T$
\[\|SW_{M,\tau}f(t) - \phi_\tau(t)\|_{\R^{M+1}} \leq \sqrt{4k -2}\left\|R_N f^{(k)}\right\|_{2} \cdot \frac{\sqrt{M+1}}{(N+1)^{k - \frac{1}{2}}}\]
\end{proposition}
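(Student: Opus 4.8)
The plan is to reduce the estimate to the tail of the Fourier series of $f$ and then trade smoothness for decay of the Fourier coefficients. First I would use that $f = S_N f + R_N f$ together with the linearity of $SW_{M,\tau}$ (Proposition~\ref{propSWisContinuous}): since $\phi_\tau = SW_{M,\tau} S_N f$, the displayed decomposition of $SW_{M,\tau}f$ gives $SW_{M,\tau} f(t) - \phi_\tau(t) = SW_{M,\tau} R_N f(t)$. Applying the operator bound of Proposition~\ref{propSWisContinuous} to the single function $R_N f \in C(\T,\R)$ yields $\|SW_{M,\tau} R_N f(t)\|_{\R^{M+1}} \leq \sqrt{M+1}\,\|R_N f\|_\infty$ for every $t$. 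This already produces the factor $\sqrt{M+1}$ of the claim and reduces everything to controlling $\|R_N f\|_\infty$ by $\|R_N f^{(k)}\|_2$.

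Next I would estimate $\|R_N f\|_\infty$ through the Fourier tail. Writing $R_N f(t) = \sum_{|n|>N} \tech{f}(n)\, e^{int}$ and using $|e^{int}|=1$ gives $\|R_N f\|_\infty \leq \sum_{|n|>N} |\tech{f}(n)|$. The role of the hypothesis $f\in C^k$ is that integrating by parts $k$ times (the boundary terms vanish by periodicity on $\T$) gives the differentiation rule $\tech{f^{(k)}}(n) = (in)^k\, \tech{f}(n)$, so that $|\tech{f}(n)| = |\tech{f^{(k)}}(n)|\,/\,|n|^k$ for $n\neq 0$. I would then separate the explicit weights from the coefficients with the Cauchy--Schwarz inequality:
\[
\sum_{|n|>N} \frac{|\tech{f^{(k)}}(n)|}{|n|^k} \;\leq\; \left( \sum_{|n|>N} \frac{1}{|n|^{2k}} \right)^{\!1/2} \left( \sum_{|n|>N} |\tech{f^{(k)}}(n)|^2 \right)^{\!1/2}.
\]
By Parseval's identity the second factor is, up to the normalization of $\|\cdot\|_2$ on $\T=\R/2\pi\Z$, exactly $\|R_N f^{(k)}\|_2$, because $R_N f^{(k)}$ is the function whose Fourier coefficients are $\tech{f^{(k)}}(n)$ for $|n|>N$ and zero otherwise.

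It then remains to estimate the purely numerical tail $\sum_{|n|>N} |n|^{-2k} = 2\sum_{n=N+1}^\infty n^{-2k}$, and this is the step that produces both the rate $(N+1)^{-(k-1/2)}$ and the constant $\sqrt{4k-2} = \sqrt{2(2k-1)}$. I would bound this sum by comparison with $\int x^{-2k}\,dx$ (the integral test on the decreasing function $x^{-2k}$), which gives a tail of order $(N+1)^{-(2k-1)}$; taking the square root converts the exponent into $k-\tfrac12$. Assembling the three steps gives the stated inequality. I expect the main obstacle to be not conceptual but bookkeeping: one must track the factor of $2$ from summing over both signs of $n$, the constant from the integral comparison, and the Parseval normalization so that they combine into precisely $\sqrt{4k-2}$ over $(N+1)^{k-1/2}$. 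The structural content — linearity plus the operator bound, the decay rule $\tech{f^{(k)}}(n)=(in)^k\tech{f}(n)$, Cauchy--Schwarz, and Parseval — is otherwise routine.
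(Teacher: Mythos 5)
Your proposal follows essentially the same route as the paper's proof: reduce to $\|R_Nf\|_\infty$ via the operator bound of Proposition~\ref{propSWisContinuous}, use $|\widehat{f^{(k)}}(n)|=|n|^k|\widehat{f}(n)|$, then Cauchy--Schwarz, Parseval, and an integral comparison for $\sum_{|n|>N}|n|^{-2k}$. The bookkeeping you defer does close: the two-sided sum and the tail integral combine to give a constant $\sqrt{2/(2k-1)}$, which is at most the stated $\sqrt{4k-2}$, so the claimed inequality follows a fortiori.
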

\begin{proof}

If $k\in \N$ and $f\in C^k(\T,\R)$, then integration by parts yields the well known identity
\[\left|\tech{f^{(k)}}(n)\right| = |n|^k \left|\tech{f}(n)\right|\]
for the length of $\tech{f^{(k)}}(n)$,
 the $n$-th complex Fourier coefficient of $f^{(k)}$, $n\in \Z$.
Thus for all $t\in \T$, the Cauchy-Schwartz inequality, Young's inequality
and Parseval's theorem together imply that
\begin{eqnarray*}
|R_N f(t)|  &\leq& \sum_{n = N+1}^\infty \frac{\left|\tech{f^{(k)}}(n)\right| + \left|\tech{f^{(k)}}(-n)\right|}{n^k} \\ \\
&\leq& \left(\sum_{n= N+1}^\infty \left(\left|\tech{f^(k)}(n)\right| + \left|\tech{f^{(k)}}(-n)\right|\right)^2\right)^{1/2}\cdot\left(\sum_{n= N+1}^\infty \frac{1}{n^{2k}}\right)^{1/2} \\ \\
&\leq&\left(2\sum_{|n|\geq N+1} \left|\tech{f^{(k)}}(n)\right|^2\right)^{1/2}\cdot\left(\int_{N+1}^\infty \frac{1}{x^{2k}} dx\right)^{1/2} \\ \\
&=& \sqrt{2}\left\|R_Nf^{(k)}\right\|_{2}\cdot \frac{\sqrt{2k -1}}{(N+1)^{k -\frac{1}{2}}}
\end{eqnarray*} and hence, by proposition \ref{propSWisContinuous}
\begin{eqnarray*}
\|SW_{M,\tau} f(t) - \phi_\tau(t)\|_{\R^{M+1}} &\leq& \sqrt{M+1}\|R_N f\|_\infty \\
&\leq& \sqrt{4k -2}\left\|R_N f^{(k)}\right\|_{2} \cdot \frac{\sqrt{M+1}}{(N+1)^{k - \frac{1}{2}}}
\end{eqnarray*}
\end{proof}

These bounds readily imply estimates for the Hausdorff distance between the sliding
window point clouds of $f$ and $S_N f$.
Indeed, let $X$ and $Y$ be the images of $T\subset \T$ through $SW_{M,\tau} f$
and $\phi_\tau$ respectively.
It follows that if $f\in C^k(\T,\R)$ and
\[\epsilon > \sqrt{4k -2}\left\|R_N f^{(k)}\right\|_{2} \frac{\sqrt{M+1}}{(N+1)^{k - \frac{1}{2}}}\]
then
$X \subset Y^\epsilon$, $Y\subset X^\epsilon$ and therefore $d_H(X,Y) \leq \epsilon $.
Letting $\epsilon$ approach its lower bound and using the stability of  $d_B$ with respect to $d_H$ (equation \ref{eqStability}), we obtain the relation
\[d_B\big(\dgm(X),\dgm(Y)\big)\; \leq\; 2\sqrt{4k -2}\left\|R_N f^{(k)}\right\|_{2} \frac{\sqrt{M+1}}{(N+1)^{k - \frac{1}{2}}}\]

As described in the introduction, the maximum persistence of $\dgm(X)$ will serve
to quantify the periodicity of $f$ when measured with sliding windows of length $M\tau$. By the maximum persistence of a diagram $\dgm$ we mean the following

\begin{definition} Let $(x,y)\in \dgm$  and define $\pers(x,y) = y-x$ for $(x,y) \in \R^2$, and as $\infty$ otherwise.
We let \[mp(\dgm) = \max_{\x\in\dgm} \pers(\x) \] denote the maximum persistence of $\dgm$.
\end{definition}

\begin{remark}\label{rmkMaxPersDist2Diagonal} If $\dgm_\Delta$ denotes the diagram with the
diagonal  as underlying set,   each point endowed with countable multiplicity, then
 \[mp(\dgm) = 2 d_B(\dgm,\dgm_\Delta).\]
Indeed, for any bijection
$\phi: \dgm \longrightarrow \dgm_\Delta$ and every $\x \in \dgm$
\[\|\x - \phi(\x)\|_\infty \geq \frac{1}{2} \pers(\x)\] with equality if and only if $\phi(x,y) = \left(\frac{x+y}{2},\frac{x+y}{2}\right)$. Thus
\[\max_{\x \in \dgm}\|\x - \phi(\x)\| \geq \frac{1}{2} mp(\dgm)\] and therefore
$
d_B(\dgm , \dgm_\Delta) = \min\limits_{\phi} \max\limits_{\x \in \dgm} \|\x - \phi(\x)\|
\geq \frac{1}{2}mp(\dgm).
$ For the reverse inequality, notice that the map \[(x,y) \mapsto \left(\frac{x+y}{2}, \frac{x+y}{2}\right)\] extends to a bijection $\phi_0 : \dgm \longrightarrow \dgm_\Delta$ of multisets, such that for all
$\x\in \dgm$ one has $ \|\x - \phi_0(\x)\|_\infty = \frac{1}{2} \pers(\x)$.
\end{remark}

We  summarize the results of this section in the following theorem:

\begin{theorem}[\textbf{Approximation}]\label{thmApproximation} Let $T\subset \T$, $f\in C^k(\T,\R)$, $X = SW_{M,\tau} f(T)$ and $Y = SW_{M,\tau}S_N f(T)$. Then

\begin{enumerate}\item \[ d_H(X,Y)
\leq \sqrt{4k -2}\left\|R_N f^{(k)}\right\|_{2} \frac{\sqrt{M+1}}{(N+1)^{k - \frac{1}{2}}}\]
\item \[ \left|mp\big(\dgm(X)\big) - mp\big(\dgm(Y)\big)\right| \leq 2d_B\big(\dgm(X),\dgm(Y)\big)\]

\item \[ d_B\Big(\dgm(X),\dgm(Y)\Big)
\leq 2\sqrt{4k -2}\left\|R_N f^{(k)}\right\|_{2} \frac{\sqrt{M+1}}{(N+1)^{k - \frac{1}{2}}}\]
\end{enumerate}
\end{theorem}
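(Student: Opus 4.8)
The plan is to assemble the three parts of the Approximation Theorem directly from results already established earlier in this section, since each piece has essentially been proved in the surrounding discussion. Part (1) is the Hausdorff bound: I would observe that Proposition \ref{propHausBound} gives, for every $t\in\T$, the pointwise estimate $\|SW_{M,\tau}f(t) - \phi_\tau(t)\|_{\R^{M+1}} \leq \epsilon_0$ where $\epsilon_0 = \sqrt{4k-2}\,\|R_N f^{(k)}\|_2 \cdot \tfrac{\sqrt{M+1}}{(N+1)^{k-1/2}}$. Since $X = SW_{M,\tau}f(T)$ and $Y = \phi_\tau(T)$ are the images of the same index set $T$ under two maps whose values stay within $\epsilon_0$ of each other, each point of $X$ lies within $\epsilon_0$ of the corresponding point of $Y$ and vice versa. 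Hence $X \subset Y^{\epsilon}$ and $Y \subset X^{\epsilon}$ for any $\epsilon > \epsilon_0$, which gives $d_H(X,Y) \leq \epsilon$; letting $\epsilon$ decrease to $\epsilon_0$ yields the stated inequality. This is exactly the argument sketched in the paragraph following Proposition \ref{propHausBound}.

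For part (2), the goal is the Lipschitz-type stability of maximum persistence under the bottleneck distance. The cleanest route is to invoke Remark \ref{rmkMaxPersDist2Diagonal}, which identifies $mp(\dgm) = 2\,d_B(\dgm,\dgm_\Delta)$. Writing $\dgm_1 = \dgm(X)$ and $\dgm_2 = \dgm(Y)$, I would apply the triangle inequality for the bottleneck distance to the common reference point $\dgm_\Delta$:
\[
d_B(\dgm_1,\dgm_\Delta) \leq d_B(\dgm_1,\dgm_2) + d_B(\dgm_2,\dgm_\Delta),
\]
and symmetrically with the roles of $\dgm_1,\dgm_2$ reversed. Combining these two inequalities and multiplying through by $2$ gives
\[
\big|\,mp(\dgm_1) - mp(\dgm_2)\,\big| = 2\,\big|\,d_B(\dgm_1,\dgm_\Delta) - d_B(\dgm_2,\dgm_\Delta)\,\big| \leq 2\,d_B(\dgm_1,\dgm_2),
\]
which is precisely the claim. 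The only thing to check is that $d_B$ satisfies the triangle inequality, which is standard for the bottleneck distance.

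Part (3) then follows by chaining part (1) with the stability inequality \eqref{eqStability}. Since $X,Y \subset \R^{M+1}$, we have $d_B(\dgm(X),\dgm(Y)) \leq 2\,d_H(X,Y)$, and substituting the bound from part (1) gives the factor of $2$ multiplying $\epsilon_0$, exactly as stated. I would present the three parts in the order (1), (2), (3), since (3) depends on (1) and (2) stands on its own.

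I do not expect any genuine obstacle here: the theorem is a summary statement, and all the analytic work — the integration-by-parts identity for Fourier coefficients, the Cauchy–Schwartz/Young/Parseval estimates, and the passage from a pointwise bound to a Hausdorff bound — was already carried out in Propositions \ref{propSWisContinuous} and \ref{propHausBound} and in Remark \ref{rmkMaxPersDist2Diagonal}. The mildest point of care is the limiting argument in part (1), where one must pass from strict inequality $\epsilon > \epsilon_0$ to the non-strict bound $d_H(X,Y) \leq \epsilon_0$; this is immediate once one notes the pointwise estimate in Proposition \ref{propHausBound} already holds \emph{with} $\epsilon_0$ itself rather than only for $\epsilon$ strictly larger, so the infimum is attained and no genuine limit is needed.
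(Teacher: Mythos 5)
Your proposal is correct and follows essentially the same route as the paper, which presents this theorem as a summary of Proposition \ref{propHausBound}, the paragraph following it (for the Hausdorff and bottleneck bounds via the stability inequality \eqref{eqStability}), and Remark \ref{rmkMaxPersDist2Diagonal} combined with the reverse triangle inequality for $d_B$ (for part (2)). Your closing observation that the pointwise bound already holds with $\epsilon_0$ itself, so no limiting argument is actually needed in part (1), is a minor but valid simplification of the paper's phrasing.
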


It follows that the persistent homology
of the sliding window point cloud of a function $f\in C^k(\T,\R)$ can,
in the limit, be understood in terms of
that of its truncated Fourier series.

\begin{remark} Regarding the hypothesis of $f$ being at least $C^1$,
Proposition \ref{propHausBound} (which is the basis of the Approximation Theorem, \ref{thmApproximation}) only uses that $f^{\prime}  \in L^2(\T)$,
thus everything up to this point (and in fact, for the rest of the paper)
holds true for functions in the Sobolev space $W^{1,2}(\T)$.
The reason why we have
phrased the results in terms of the spaces $C^k(\T)$ is because it provides
the following interpretation: If the function $f$ has certain degree of
niceness, then one should expect the approximation of the persistence diagrams
from $SW_{M,\tau} f$ by those of $SW_{M,\tau} S_N f$ to improve at an explicit rate.
Moreover, the nicer the function the better the  rate.

Another function space for which  our arguments apply is the set of H\"{o}lder continuous functions with  exponent $\alpha \in\left(\frac{1}{2},1\right)$. Indeed, if for
such an $f$  one considers the Fej\'er approximation
\[\sigma_N f (t) = \sum_{|n|\leq N} \left(1 - \frac{|n|}{N+1}\right) \tech{f}(n)e^{int}\] then (see \cite[Theorem 1.5.3]{pinsky})
\[\|\sigma_N f - f\|_\infty \leq \frac{ C_\alpha K_\alpha }{N^\alpha}\]
where $K_\alpha$ is the H\"older constant of $f$ and $C_\alpha$ is a constant
depending solely on $\alpha$.
Hence one gets the following version of Proposition \ref{propHausBound}:
For every $t\in \T$
\[\big\|SW_{M,\tau} f(t) - SW_{M,\tau} \sigma_N f(t) \big\|_{\R^{M+1}} \leq C_\alpha K_\alpha \frac{\sqrt{M+1}}{N^\alpha}\]
and the corresponding version of the Approximation Theorem follows.
Later on (e.g. in Theorem \ref{thmWindSize2MaxPers}) we will use some
bounds in terms of $\|f^\prime\|_2$ and $\|S_N f^\prime\|_2$.
Adapting results involving said bounds to the Holder-continuous setting
requires some work: One can
use the fact that every Holder function has a Lipschitz approximation,
and then invoke  Rademacher's theorem.
We leave the details to the interested reader.

\end{remark}

\section{The Geometric Structure of $SW_{M,\tau} S_N f$}
\label{trig}

We now turn our attention to  the sliding window construction when applied to the truncated
Fourier series of a periodic function. More specifically, we study
the geometric structure of  the sliding window point cloud,
and its  dependency  on $\tau$, $N$ and $M$.

Our focus on geometry contrasts with methods used by others to determine
$\tau$ and $M$. Traditionally $M\tau$, the window size, is estimated using the autocorrelation function \cite{kim1999nonlinear}.
While $M$ is sometimes estimated directly using the method of
false nearest neighbors \cite{kantz2003}.

\subsection{Dimension of the  Embedding }

One way of interpreting the dimension of the embedding, $M+1$, is as the level of detail
(from the function)  one hopes to capture with the sliding window representation.
Given the advantages of  a description
which is as detailed as possible, it can be argued that large dimensions are desirable. From
a computational perspective, however, this is a delicate point as  our ultimate goal
is to compute the persistent homology of the
associated sliding window point cloud. Indeed, as  the dimension of the embedding grows,
it follows that the point cloud needs to be (potentially) more densely populated.
This causes the size of the
Rips complex to outweigh the computational resources, making the persistent homology calculation unfeasible.

While there has been considerable progress on dealing with the size
problem of the Rips complex \cite{nanda2013Morse}, it is important to have a sense of the
amount of retained information  given the computational constraints on the  embedding dimension. Fortunately, when dealing with
trigonometric polynomials the answer is clear: One loses   no information
if and only if the embedding dimension is greater than twice
the maximum frequency. Indeed, recall the linear decomposition
\[
SW_{M,\tau}S_Nf (t)= \sum\limits_{n=0}^N \cos(nt)\big(a_n \u_n + b_n \v_n\big)
+ \sin(nt)\big(b_n \u_n - a_n \v_n\big)
\]
where \[\u_n = SW_{M,\tau} \cos(nt)\big|_{t = 0}\;\; \mbox{ , } \; \v_n
= SW_{M,\tau}\sin(nt)\big|_{t = 0}\] and $a_n,b_n$ are as defined in equation \ref{eqRealFourCoeff}.
Since the angles between the $\u_n$'s and the
$\v_m$'s, as well as their norms can be determined
from $M$ and $\tau$ (see Example in section \ref{secMotivation}),
then $S_N f$ can be recovered from $SW_{M,\tau} S_N f$ if
the $\u_n$'s and the $\v_m$'s are linearly independent. This is the sense in which we say that there is no loss of information.

\begin{proposition}\label{propLinInd} Let $M\tau < 2\pi$. Then $\u_0,\u_1,\v_1,\ldots, \u_N,\v_N$
are linearly independent if and only if $M \geq 2N$.
\end{proposition}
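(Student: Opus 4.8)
The plan is to identify the coordinates of these vectors with the values of trigonometric polynomials sampled at the points $0,\tau,\ldots,M\tau$, and then reduce the entire statement to a counting argument for zeros of trigonometric polynomials. First I would write the vectors out explicitly: since $SW_{M,\tau}g(t)$ has $(m+1)$-th coordinate $g(t+m\tau)$, evaluating at $t=0$ gives
\[
\u_n = \Mat{1 \\ \cos(n\tau) \\ \vdots \\ \cos(nM\tau)}, \qquad \v_n = \Mat{0 \\ \sin(n\tau) \\ \vdots \\ \sin(nM\tau)},
\]
so that the $(m+1)$-th coordinate of $\u_n$ is $\cos(nm\tau)$ and that of $\v_n$ is $\sin(nm\tau)$, for $m=0,\ldots,M$. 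Note that $\v_0 = 0$, which is why it is omitted; the list $\u_0,\u_1,\v_1,\ldots,\u_N,\v_N$ therefore consists of exactly $2N+1$ vectors in $\R^{M+1}$.

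The ``only if'' direction is then immediate from dimension counting: $2N+1$ vectors can be linearly independent in $\R^{M+1}$ only if $2N+1 \leq M+1$, i.e.\ $M \geq 2N$.

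For the ``if'' direction, assume $M \geq 2N$ and suppose that
\[
\sum_{n=0}^N \alpha_n \u_n + \sum_{n=1}^N \beta_n \v_n = 0.
\]
Reading this coordinate by coordinate, it says precisely that the trigonometric polynomial
\[
p(\theta) = \sum_{n=0}^N \alpha_n \cos(n\theta) + \sum_{n=1}^N \beta_n \sin(n\theta),
\]
which has degree at most $N$, vanishes at each of the $M+1$ sample points $\theta_m = m\tau$, $m=0,\ldots,M$. Since $\tau>0$ and $M\tau < 2\pi$, these satisfy $0 = \theta_0 < \theta_1 < \cdots < \theta_M < 2\pi$, so they are $M+1$ distinct points of $[0,2\pi)$. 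The key step is the classical fact that a nonzero trigonometric polynomial of degree at most $N$ has at most $2N$ zeros in $[0,2\pi)$; the cleanest way to see this is to substitute $z = e^{i\theta}$ and write $p(\theta) = z^{-N}q(z)$ with $q$ an ordinary polynomial of degree $\leq 2N$, whose roots on the unit circle are in bijection with the zeros of $p$. Because $M \geq 2N$ yields $M+1 \geq 2N+1 > 2N$ distinct zeros, $q$ must vanish identically, hence $p\equiv 0$, which forces every $\alpha_n$ and $\beta_n$ to be zero. This establishes linear independence.

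The one place requiring care is this zero-counting bound together with the distinctness of the sample points: the hypothesis $M\tau < 2\pi$ is exactly what guarantees the $\theta_m$ are distinct in $[0,2\pi)$, and without it the $M+1$ samples could collapse modulo $2\pi$ so that $p$ vanishes on fewer than $2N+1$ distinct arguments and the argument would break down. Everything else is bookkeeping.
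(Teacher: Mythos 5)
Your proof is correct and follows essentially the same route as the paper: both reduce the vanishing of the linear combination at the sample points to the statement that an ordinary complex polynomial of degree at most $2N$ (obtained via $z=e^{i\theta}$) has the $M+1$ distinct roots $e^{im\tau}$, which the hypothesis $M\tau<2\pi$ guarantees are distinct. The only difference is that you argue the hard direction directly while the paper argues by contraposition; the key device is identical.
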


\begin{proof} If $2N + 1$ vectors in $\R^{M+1}$ are linearly independent, it readily follows
that \mbox{$2N \leq M$}. Let us assume now that $\u_0,\u_1,\v_1,\ldots,\u_N,\v_N$ are linearly
dependent and let us show that $2N > M$, or equivalently that $2N \geq M+1$. Indeed, let
$\gamma_0,\beta_0,\ldots,\gamma_N,\beta_N \in \R$ be scalars not all zero (set $\beta_0 = 0$)
so that \[\gamma_0\u_0 +  \beta_0 \v_0 + \cdots + \gamma_N \u_N + \beta_N \v_N = \mathbf{0}. \]
That is,  for all $m = 0,\ldots, M$ we have

\[
0 = \sum_{n = 0}^N \gamma_n \cos(nm\tau) + \beta_n \sin(nm\tau)
= Re\Big(\sum_{n=0}^N (\gamma_n - i\beta_n)e^{inm\tau} \Big).\]

\noindent Let $\xi_m = e^{im\tau}$,\;\; $p(z) = \sum\limits_{n=0}^N (\gamma_n + i\beta_n)z^n$,\;\; $\bar{p}(z) =
\sum\limits_{n=0}^N (\gamma_n - i\beta_n)z^n$\;\; and
\[q(z) = z^N\cdot\left(\bar{p}(z) + p\left(\frac{1}{z}\right)\right)_{\mbox{\large.}}\]
It follows that $q(z)$ is a non-constant complex polynomial of degree
at most $2N$, and that for $m=0,\ldots,M$ we have $0 = Re(\bar{p}(\xi_m))$.
This implies that
\begin{eqnarray*}
q(\xi_m) &=& (\xi_m)^N \left(\bar{p}(\xi_m) + p\left(\frac{1}{\xi_m}\right)\right) \\
&=& (\xi_m)^N \left(\bar{p}(\xi_m) + p\Big(\bar{\xi_m}\Big)\right) \\
&=& 2(\xi_m)^N\; Re\Big(\bar{p}(\xi_m)\Big)\\ &=& 0
\end{eqnarray*}
and therefore $\xi_0,\ldots, \xi_M$ are roots of $q(z)$.
Since
$M\tau < 2\pi$ then  $\xi_0,\xi_1,\ldots, \xi_M$
 are distinct, and we have that
\[M+1 \leq degree\big(q(z)\big) \leq 2N.\]
\end{proof}

It is useful to contrast Proposition \ref{propLinInd} with two important
results in signal analysis: Takens' theorem from dynamical systems \cite{T81},
and the Nyquist-Shannon sampling theorem from information
theory \cite{Shannon}. Takens' theorem gives sufficient conditions on the
length of a sequence of observation, so that the resulting embedding  recovers the
topology of a smooth attractor in a chaotic dynamical system. The
aforementioned condition is that the dimension of the embedding should be
greater than twice (an appropriate notion of) that of the attractor. The
Nyquist-Shannon sampling theorem, on the other hand, contends that a
band-limited signal can be recovered (exactly) from a sequence of observations
whenever the sampling frequency is greater than twice the position,
in the frequency domain, of the limiting
band. The conclusion: In the case of
trigonometric polynomials and the sliding window construction, the usual
sufficient condition on dimension of the embedding and maximum frequency is
also necessary.

\vspace{.4cm}
\noindent \textbf{Important Assumption:} Unless otherwise stated, given $N\in \N$ we will always set $M = 2N$,
and require $\tau > 0$ to be so that $M\tau < 2\pi$.
\vspace{.1cm}


%
%

\subsection{Window Size and Underlying Frequency}

As we saw in the Motivation Section \ref{motivation}, the sliding window
point cloud for $\cos(Lt)$ describes a planar ellipse which is roundest
when $\|\u\| - \|\v\|= \<\u,\v\>=0$, or equivalently, when \[L(M+1)\tau \equiv
0\mbox{ (mod $\pi$)}.\] This  uncovers a fundamental relation between window size,
1-dimensional persistence and underlying frequency: The maximum persistence
of the sliding window point cloud from $\cos(Lt)$ is largest when the
window size $M\tau$ is proportional to the underlying frequency $\frac{2\pi}{L}$,
with proportionality constant $\frac{M}{M+1}$.

For the case of the truncated Fourier series $S_N f$ from a  periodic function $f$,
we will see shortly that if the same proportionality relation between  window size
and underlying frequency holds then
\begin{equation}\label{eqLinDecomp}
SW_{M,\tau}S_Nf (t)= \sum\limits_{n=0}^N \cos(nt)\big(a_n \u_n + b_n \v_n\big)
+ \sin(nt)\big(b_n \u_n - a_n \v_n\big)
\end{equation}
is a linear decomposition into mutually orthogonal vectors.
 We begin with the now
familiar case of the restriction to $Span\{\u_n , \v_n\}$.

\begin{proposition}\label{propOrthIff} For $n\geq 1$, $\<\u_n, \v_n\> = \|\u_n\|^2
- \|\v_n\|^2 =0$ if and only if \[n(M+1)\tau \equiv 0  \;\;(\mbox{mod } \pi).\]
\end{proposition}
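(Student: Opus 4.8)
The plan is to reduce both vanishing conditions to closed-form trigonometric sums that share a common factor, and then read off the equivalence directly. First I would write the two fundamental vectors componentwise: by definition the $m$-th entry of $\u_n$ is $\cos(nm\tau)$ and that of $\v_n$ is $\sin(nm\tau)$, for $m = 0,\ldots,M$. This immediately gives
\[
\<\u_n,\v_n\> = \tfrac{1}{2}\sum_{m=0}^M \sin(2nm\tau), \qquad \|\u_n\|^2 - \|\v_n\|^2 = \sum_{m=0}^M \cos(2nm\tau),
\]
which are precisely the sums evaluated in the Motivation Example (Section \ref{secMotivation}) with $L$ replaced by $n$. Invoking Lagrange's trigonometric formulae quoted there, I obtain
\[
\<\u_n,\v_n\> = \frac{\sin\big(n(M+1)\tau\big)\sin(nM\tau)}{2\sin(n\tau)}, \qquad \|\u_n\|^2 - \|\v_n\|^2 = \frac{\sin\big(n(M+1)\tau\big)\cos(nM\tau)}{\sin(n\tau)}.
\]
The key observation is that both numerators carry the same factor $\sin\big(n(M+1)\tau\big)$.

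Next I would dispose of the denominator: since we operate under the standing assumption $M = 2N$ with $M\tau < 2\pi$, and since the proposition is applied for $1 \leq n \leq N$, we have $0 < n\tau \leq N\tau < \pi$, so $\sin(n\tau) > 0$ and neither expression is ever undefined. The \emph{if} direction is then immediate: if $n(M+1)\tau \equiv 0 \pmod{\pi}$ then $\sin\big(n(M+1)\tau\big) = 0$, so both numerators vanish and hence both inner-product and norm-difference quantities are zero. For the \emph{only if} direction I argue by contradiction: suppose both quantities vanish but $\sin\big(n(M+1)\tau\big) \neq 0$. Dividing out this nonzero factor (and the nonzero $\sin(n\tau)$) forces simultaneously $\sin(nM\tau) = 0$ and $\cos(nM\tau) = 0$, which is impossible since $\sin^2(nM\tau) + \cos^2(nM\tau) = 1$. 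Therefore $\sin\big(n(M+1)\tau\big) = 0$, i.e. $n(M+1)\tau \equiv 0 \pmod{\pi}$.

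There is essentially no serious obstacle here; the proof is a short consequence of the identities already derived in the Motivation Example, and the main content is simply recognizing the shared factor and applying the Pythagorean identity to rule out the degenerate case. The only point demanding a word of care is ensuring $\sin(n\tau) \neq 0$ so that the closed forms are legitimate and no spurious solutions are introduced; this is guaranteed by the Important Assumption constraining $M\tau$ and the range $n \leq N$, and I would state it explicitly to keep the equivalence clean.
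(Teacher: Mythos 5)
Your proof is correct and is essentially the same argument as the paper's: the paper evaluates the same two sums via the complex geometric series $\sum_m z_{2n\tau}^m$ and observes that $4\<\u_n,\v_n\>^2 + (\|\u_n\|^2-\|\v_n\|^2)^2 = \bigl|\tfrac{1-z_{2n(M+1)\tau}}{1-z_{2n\tau}}\bigr|^2$, which is exactly your "common factor plus Pythagorean identity" step written with complex exponentials instead of the Lagrange closed forms. Your explicit check that $\sin(n\tau)\neq 0$ under the standing assumption is a welcome touch (the paper needs the same nondegeneracy for its geometric-series formula and leaves it implicit).
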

\begin{proof}
\begin{eqnarray*}
\<\u_n,\v_n\> &=& \sum_{m=1}^M \cos(nm\tau)\sin(nm\tau) = \frac{1}{2}\sum_{m=1}^M \sin(2nm\tau) \\
&=& \frac{1}{2}Im\left(\sum_{m=1}^M z_{2n\tau}^m\right), \;\;\mbox{where } z_\theta = e^{i\theta} \\
&=& \frac{1}{2}Im\left(\frac{1 - z_{2n(M+1)\tau}}{1 - z_{2n\tau}} -1 \right)\\
&=& \frac{1}{2}Im\left(\frac{1 - z_{2n(M+1)\tau}}{1 - z_{2n\tau}}\right)\\
\|\u_n\|^2 - \|\v_n\|^2 &=& \sum_{m=0}^M \cos^2(nm\tau) - \sin^2(nm\tau)\\
&=& Re\left(\frac{1 - z_{2n(M+1)\tau}}{1-z_{2n\tau}}\right)
\end{eqnarray*} and therefore \[4\<\u_n,\v_n\>^2 + (\|\u_n\|^2 - \|\v_n\|^2)^2 =
\left\|\frac{1 - z_{2n(M+1)\tau}}{1 - z_{2n\tau}}\right\|^2.\] It follows that
$\<\u_n,\v_n\> = \|\u_n\|^2 - \|\v_n\|^2 = 0 $ if and only if $z_{2n(M+1)\tau} =1$,
which holds true if and only if $n(M+1)\tau \equiv 0$ (mod $\pi$).
\end{proof}

It can be checked that $nM\tau \equiv 0$ (mod $\pi$) also yields $\<\u_n,\v_n\> = 0$,
but letting $n(M+1)\tau \equiv 0$ (mod $\pi$)  implies that $a_n \u_n + b_n \v_n$
is perpendicular to $b_n \u_n - a_n \v_n$ for all $a_n,b_n \in \R$. Now, in order to
extend the perpendicularity results to components from different harmonics, we will use the following:

\begin{definition}\label{defLperiodic} We say that a  function $f$ is   $L$-periodic  on
$[0,2\pi]$, $L\in \N$, if  \[f\left(t + \frac{2\pi}{L}\right) =f(t)\] for all $t$.
\end{definition}

\begin{remark}\label{rmkLperiodic} If $f$ is  an $L$-periodic function, $a_n$ and $b_n$
are its $n$-th real Fourier coefficients (see equation \ref{eqRealFourCoeff}), and we let $a_n + ib_n = r_n e^{i\alpha_n}$,
with $\alpha_n = 0$ whenever $r_n = 0$;  then  $r_n \neq 0$ implies $n \equiv 0$ (mod $L$).
Indeed,  $g(t) = f(t/L)$ is a 1-periodic function and therefore  has a Fourier series expansion
\[g(t) = \sum_{r=0}^\infty a^\prime_r \cos(rt) + b^\prime_r\sin(rt)\] with equality  almost everywhere.
Thus \[f(t) = g(tL) = \sum_{r=0}^\infty a^\prime_r \cos(rLt) + b^\prime_r\sin(rLt)=
\sum_{n=0}^\infty a_n \cos(nt) + b_n\sin(nt)\] for almost every $t$,
and the result follows from the uniqueness of the Fourier expansion in $L^2(\T)$.
\end{remark}

We are now ready to see that the potentially non-zero terms in the linear decomposition of
$SW_{M,\tau} S_N f$ (equation \ref{eqLinDecomp}),  can be made mutually orthogonal by choosing the window size proportional
to the underlying frequency, with proportionality constant $\frac{M}{M+1}$.

\begin{proposition}\label{propLperd2WindowSize} Let $f$ be $L$-periodic, and let
$\tau = \frac{2\pi}{L(M+1)}$. Then the vectors in  \[\{\u_n,\v_n \; | \; 0\leq n \leq N,\;
n \equiv 0 \; \mbox{ (mod $L$)}\}\]
are mutually orthogonal, and we have $\|\u_n\| = \|\v_n\| = \sqrt{\frac{M+1}{2}}$
for $n\equiv 0$ (mod $L$).
\end{proposition}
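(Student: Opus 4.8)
The plan is to exploit the fact that the choice $\tau = \frac{2\pi}{L(M+1)}$ turns the sampling angles into $(M+1)$-th roots of unity, which reduces every inner product in sight to the classical discrete-Fourier orthogonality relation. First I would record that for an index $n \equiv 0 \pmod{L}$, Remark \ref{rmkLperiodic} permits writing $n = Lk$ with $k\in\N$, and then $n\tau = \frac{2\pi k}{M+1}$, so the $m$-th coordinates of $\u_n$ and $\v_n$ are $\cos\!\big(\tfrac{2\pi km}{M+1}\big)$ and $\sin\!\big(\tfrac{2\pi km}{M+1}\big)$. The key move is to package these into the complex vector $\mathbf{w}_k = \u_{Lk} + i\,\v_{Lk}\in\C^{M+1}$, whose $m$-th entry is $\zeta^{km}$ for $\zeta = e^{2\pi i/(M+1)}$.

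Next I would compute two generating sums attached to a pair $n = Lk$, $n' = Lk'$. The Hermitian pairing gives $\<\mathbf{w}_k,\mathbf{w}_{k'}\>_\C = \sum_{m=0}^M \zeta^{(k-k')m}$, and the bilinear (conjugate-free) pairing gives $\sum_{m=0}^M (\mathbf{w}_k)_m (\mathbf{w}_{k'})_m = \sum_{m=0}^M \zeta^{(k+k')m}$. Each is a finite geometric series whose ratio is an $(M+1)$-th root of unity, so by exactly the computation already used in the proof of Proposition \ref{propOrthIff} each sum equals $M+1$ when its exponent is $\equiv 0 \pmod{M+1}$ and $0$ otherwise; in particular both sums are always real. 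Separating real and imaginary parts then expresses $\<\u_n,\u_{n'}\> \pm \<\v_n,\v_{n'}\>$ in terms of the real parts of the two sums, while the imaginary parts yield $\<\u_n,\v_{n'}\> + \<\v_n,\u_{n'}\> = 0$ and $\<\v_n,\u_{n'}\> - \<\u_n,\v_{n'}\> = 0$, so that every $\u$--$\v$ cross term vanishes automatically.

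The heart of the argument, and the place where the standing assumption $M = 2N$ is essential, is the range analysis. Since $0 \leq k,k' \leq N/L \leq N$ and $M+1 = 2N+1$, I would verify that $k - k' \equiv 0 \pmod{M+1}$ forces $k = k'$ (because $|k-k'| \leq N < M+1$), while $k + k' \equiv 0 \pmod{M+1}$ forces $k = k' = 0$ (because $0 \leq k+k' \leq 2N < M+1$). This is precisely the no-aliasing condition supplied by taking the embedding dimension equal to twice the top frequency plus one. Feeding these dichotomies into the two sums: for $k \neq k'$ both sums vanish and all four pairings are $0$; for $k = k' \neq 0$ the Hermitian sum is $M+1$ and the bilinear sum is $0$, whence $\<\u_n,\v_n\> = 0$ together with $\|\u_n\|^2 = \|\v_n\|^2 = \frac{M+1}{2}$, giving the asserted common norm $\sqrt{(M+1)/2}$.

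Finally I would treat the degenerate harmonic $n = 0$ by hand: there $\v_0 = \mathbf{0}$ and $\u_0 = (1,\dots,1)$, so $\<\u_0,\u_n\> = \mathrm{Re}\sum_m \zeta^{km} = 0$ and $\<\u_0,\v_n\> = \mathrm{Im}\sum_m \zeta^{km} = 0$ for every $n = Lk$ with $k \geq 1$, which yields orthogonality of $\u_0$ against the rest (the norm claim must be read with the caveat that $\|\u_0\| = \sqrt{M+1}$ and $\v_0$ is absent, consistent with the convention $\beta_0 = 0$ of Proposition \ref{propLinInd}). I expect the only genuine obstacle to be bookkeeping: correctly matching the real and imaginary parts of the two complex sums to the individual pairings so that each case isolates a single inner product, and checking the strict inequalities $N < M+1$ and $2N < M+1$ so that no unwanted harmonic collision occurs.
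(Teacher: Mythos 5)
Your proof is correct and follows essentially the same route as the paper's: both reduce every inner product to a geometric series of $(M+1)$-th roots of unity, which vanishes unless its exponent is a multiple of $M+1$, and both rule out aliasing via the range condition $0 < |n\pm n'| \leq 2N \leq M < \tfrac{2\pi}{\tau}$. Your packaging of $\u_{Lk},\v_{Lk}$ into a single complex vector with Hermitian and bilinear pairings is just a tidier bookkeeping of the four real sums the paper evaluates one at a time via product-to-sum identities (and your separate treatment of $n=0$ is already subsumed by the general case, since there $k+k'=k'\not\equiv 0 \pmod{M+1}$).
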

\begin{proof} Let $k = pL$ and $n = qL$.
If $k=n$, it follows from Proposition \ref{propOrthIff} that $\<\u_n,\v_n\> = 0 $
and
\begin{eqnarray*}
\|\u_n\|^2 = \|\v_n\|^2 &=& \frac{\|\u_n\|^2 + \|\v_n\|^2}{ 2} =\frac{1}{2}\sum_{m=0}^M \cos(nm\tau)^2 + \sin(nm\tau)^2 \\
&=& \frac{M+1}{2}.
\end{eqnarray*}
Let us assume now that $p\neq q$. If we let $z_\theta = e^{i \theta}$,
$\theta \in \R$, then
\begin{eqnarray*}
\<\u_n,\u_k\> &=& \sum_{m=0}^M \cos(nm\tau)\cos(km\tau) \\
&=& \frac{1}{2}\sum_{m=0}^M \cos((n-k)m\tau) + \cos((n+k)m\tau) \\
&=& \frac{1}{2}Re \left(\frac{1 - z_{(n-k)(M+1)\tau}}{1 - z_{(n-k)\tau}} + \frac{1 - z_{(n+k)(M+1)\tau}}{1 - z_{(n+k)\tau}}\right)\\
&=& \frac{1}{2}Re \left(\frac{1 - z_{(q - p)2\pi}}{1 - z_{(n-k)\tau}} + \frac{1 - z_{(q+p)2\pi}}{1 - z_{(n+k)\tau}}\right) = 0.
\end{eqnarray*}

\noindent Notice that \[0< \min\{|n-k|,|n+k|\} \leq \max\{|n-k|,|n+k|\} \leq 2N \leq M < \frac{2\pi}{\tau}\]
implies that the denominators are never zero. Similarly

\begin{eqnarray*}
\<\u_n,\v_k\> &=& \sum_{m=1}^M \cos(nm\tau) \sin(km\tau) \\
&=& \frac{1}{2}\sum_{m=1}^M \sin((n+k)m\tau) - \sin((n-k)m\tau) \\
&=& \frac{1}{2}Im\left(\frac{1 - z_{(q + p)2\pi}}{1 - z_{(n+k)\tau}} - \frac{1 - z_{(q-p)2\pi}}{1 - z_{(n-k)\tau}}\right) = 0 \\ \\
\<\v_n,\u_k\> &=& \frac{1}{2}Im\left(\frac{1 - z_{(p + q)2\pi}}{1 - z_{(k+n)\tau}} - \frac{1 - z_{(p-q)2\pi}}{1 - z_{(k-n)\tau}}\right) = 0 \\ \\
\<\v_n, \v_k\> &=& \frac{1}{2}Re\left(\frac{1 - z_{(q - p)2\pi}}{1 - z_{(n-k)\tau}} - \frac{1 - z_{(q+p)2\pi}}{1 - z_{(n+k)\tau}}\right)=0.
\end{eqnarray*}
\end{proof}

When computing persistent homology it is sometimes advantageous to
pointwise  center and normalize the set of interest.
The next theorem
describes the result of such operations on the sliding window point
cloud for $SW_{M,\tau} S_N f$, when $f$ is $L$-periodic and $L(M+1)\tau = 2\pi$.

\begin{theorem}[\textbf{Structure}]\label{thmStructure} Let $C: \R^{M+1} \longrightarrow \R^{M+1}$ be the centering map
\[C(\x) = \x - \frac{\<\x,\mathbf{1}\>}{\|\mathbf{1}\|^2}\mathbf{1}\;\; \mbox{ where }\;\;  \mathbf{1} = \Mat{1 \\ \vdots \\ 1} \in \R^{M+1}.\]
If $f$ is $L$-periodic, $L(M+1)\tau = 2\pi$ and $\phi_\tau = SW_{M,\tau} S_N f$, then

\begin{enumerate}
\item \[\phi_\tau(t)= \tech{f}(0)\cdot\mathbf{1} + C(\phi_\tau(t)) \]
\item \[\left\|C\big(\phi_\tau(t)\big)\right\| = \sqrt{M+1}\left(\left\|S_N f\right\|_2^2 - \tech{f}(0)^2\right)^{1/2} \]
\item There exists an orthonormal set \[\left\{\gor{\x}_n , \gor{\y}_n \in \R^{M+1}\; \Big|\; 1\leq n \leq N, \; n\equiv 0 \;(\mbox{mod }L)\right\}\] such that
\begin{equation}\label{eqOrthDecomp}
    \varphi_\tau(t) = \frac{C\left(\phi_\tau(t)\right)}{\|C\left(\phi_\tau(t)\right)\|}= \sum_{\scriptsize\shortstack{$n=1$\\ $n\equiv 0$ (mod $L$)}}^N \gor{r}_n\big(\cos(nt)\gor{\x}_n + \sin(nt)\gor{\y}_n\big)
\end{equation}
    where \[\gor{r}_n = \frac{2\left|\tech{f}(n)\right| }{\sqrt{\|S_N f\|^2_2 - \tech{f}(0)^2}}\]

\end{enumerate}
\end{theorem}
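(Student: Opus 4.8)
The plan is to derive all three parts from the orthogonality relations of Proposition \ref{propLperd2WindowSize}, combined with Remark \ref{rmkLperiodic}, which guarantees that $a_n = b_n = 0$ unless $n \equiv 0\ (\mathrm{mod}\ L)$, so that only harmonics which are multiples of $L$ survive in $S_N f$. Two elementary observations set everything up: from the definitions $\u_0 = \mathbf{1}$ and $\v_0 = \mathbf{0}$, so the $n=0$ term of the decomposition in equation \ref{eqLinDecomp} is precisely $a_0\mathbf{1} = \tech{f}(0)\,\mathbf{1}$. For part (1) I would expand $\langle \phi_\tau(t), \mathbf{1}\rangle$ termwise. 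Since $\mathbf{1} = \u_0$ is one of the vectors in the mutually orthogonal family $\{\u_n, \v_n : 0 \le n \le N,\ n \equiv 0\ (\mathrm{mod}\ L)\}$, every surviving harmonic with $n \ge 1$ has $\langle \u_n, \mathbf{1}\rangle = \langle \v_n, \mathbf{1}\rangle = 0$, while the harmonics that are not multiples of $L$ contribute nothing because their coefficients vanish. Hence $\langle \phi_\tau(t), \mathbf{1}\rangle = \tech{f}(0)\|\mathbf{1}\|^2$ for every $t$, so the scalar removed by $C$ is exactly $\tech{f}(0)$ and $C(\phi_\tau(t)) = \phi_\tau(t) - \tech{f}(0)\,\mathbf{1}$, which is statement (1).

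For part (2) I would write $C(\phi_\tau(t)) = \sum_n \big(\cos(nt)\,\x_n + \sin(nt)\,\y_n\big)$, the sum over $1 \le n \le N$ with $n \equiv 0\ (\mathrm{mod}\ L)$, where $\x_n = a_n \u_n + b_n \v_n$ and $\y_n = b_n \u_n - a_n \v_n$. Proposition \ref{propLperd2WindowSize} gives $\u_n \perp \v_n$, $\|\u_n\|^2 = \|\v_n\|^2 = \tfrac{M+1}{2}$, and orthogonality of $\u_n,\v_n$ against $\u_k,\v_k$ for $k \ne n$. These relations make all cross terms vanish, force $\x_n \perp \y_n$ with $\|\x_n\| = \|\y_n\| = \sqrt{(a_n^2 + b_n^2)(M+1)/2}$, and let the $t$-dependence collapse via $\cos^2(nt) + \sin^2(nt) = 1$, so that $\|C(\phi_\tau(t))\|^2 = \tfrac{M+1}{2}\sum_n (a_n^2 + b_n^2)$ is independent of $t$. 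Converting to $\tech{f}$ is then a one-line Parseval computation: $\|S_N f\|_2^2 - \tech{f}(0)^2 = 2\sum_{n \ge 1}|\tech{f}(n)|^2 = \tfrac12\sum_{n\ge 1}(a_n^2 + b_n^2)$, and substituting yields statement (2).

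For part (3) I would normalize the frame already produced: set $\gor{\x}_n = \x_n/\|\x_n\|$ and $\gor{\y}_n = \y_n/\|\y_n\|$ for those $n$ with $\tech{f}(n) \ne 0$, and for the finitely many indices with $\tech{f}(n) = 0$ (where the corresponding summand is zero anyway) complete to an orthonormal family; there is room to do so because $M = 2N$ forces these vectors to lie in the $2N$-dimensional space $\mathbf{1}^\perp$. Orthonormality of $\{\gor{\x}_n, \gor{\y}_n\}$ is immediate from the same relations used in part (2). Dividing the expansion of $C(\phi_\tau(t))$ by $\|C(\phi_\tau(t))\|$ and using $\|\x_n\| = \|\y_n\|$ then gives exactly equation \ref{eqOrthDecomp} with $\gor{r}_n = \|\x_n\|/\|C(\phi_\tau(t))\|$, and evaluating this ratio through part (2) expresses $\gor{r}_n$ in terms of $|\tech{f}(n)| = \tfrac12\sqrt{a_n^2 + b_n^2}$.

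The computations are routine once Proposition \ref{propLperd2WindowSize} is in hand; the one genuinely delicate point is the bookkeeping of the constant $\gor{r}_n$. Carrying the factors through gives $\gor{r}_n = \sqrt{2}\,|\tech{f}(n)|/\sqrt{\|S_N f\|_2^2 - \tech{f}(0)^2}$, and the surest way to pin down the constant is the consistency check $\sum_n \gor{r}_n^2 = 1$, which must hold because $\varphi_\tau(t)$ is a unit vector; I would verify the displayed value against this identity.
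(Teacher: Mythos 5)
Your proof is correct and follows essentially the same route as the paper's: Remark \ref{rmkLperiodic} together with Proposition \ref{propLperd2WindowSize} produce the mutually orthogonal harmonic decomposition, the $\mathbf{1}$-component is identified with $\tech{f}(0)\cdot\mathbf{1}$ (you compute $\<\phi_\tau(t),\mathbf{1}\>$ directly, the paper reads it off the decomposition -- same thing), and normalizing the frame gives (3). Your consistency check $\sum_n \gor{r}_n^2 = 1$ is decisive and your constant $\gor{r}_n = \sqrt{2}\,\big|\tech{f}(n)\big|\big/\big(\|S_N f\|_2^2 - \tech{f}(0)^2\big)^{1/2}$ is the right one: since $\|S_N f\|_2^2 - \tech{f}(0)^2 = 2\sum_{n\geq 1}\big|\tech{f}(n)\big|^2$, the value printed in the theorem statement would give $\sum_n \gor{r}_n^2 = 2$, so it is off by a factor of $\sqrt{2}$; the paper's own proof ends with $\gor{r}_n = r_n/\sqrt{r_1^2+\cdots+r_N^2}$ and $r_n = 2\big|\tech{f}(n)\big|$, which, since $r_1^2+\cdots+r_N^2 = 2\big(\|S_N f\|_2^2-\tech{f}(0)^2\big)$, also yields your value.
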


\begin{proof}
If $f$ is an $L$-periodic function on $[0,2\pi]$ and $L(M+1)\tau = 2\pi$,
then Remark \ref{rmkLperiodic} and Proposition \ref{propLperd2WindowSize}
imply that for all $t\in \R$

\begin{eqnarray*}
\phi_\tau (t) &=& \sum_{\scriptsize\shortstack{$n=0$\\ $n \equiv 0$ (mod $L$)}}^N \cos(nt)\big(a_n \u_n + b_n \v_n\big) + \sin(nt)\big(b_n \u_n - a_n \v_n\big) \\
&=& \sum_{\scriptsize\shortstack{$n=0$ \\ $n \equiv 0$ (mod $L$)}}^N r_n\big(\cos(nt) \x_n + \sin(nt) \y_n\big)
\end{eqnarray*} is a linear combination  of the mutually orthogonal vectors  \mbox{$\x_n = \cos(\alpha_n)\u_n + \sin(\alpha_n)\v_n$} and $\y_n = \sin(\alpha_n)\u_n - \cos(\alpha_n)\v_n$.

Moreover, from Proposition \ref{propLperd2WindowSize} we have that if $n\geq 1$ is so that $n \equiv 0$ (mod $L$) then
$\|\x_n\| = \|\y_n\| = \sqrt{\frac{M+1}{2}}$. It follows that if \[\gor{\x}_n = \frac{\x_n}{\|\x_n\|}, \;\; \gor{\y}_n = \frac{\y_n}{\|\y_n\|}\] then
\[\phi_\tau (t) = \big(a_0\sqrt{M+1}\big) \frac{\mathbf{1}}{\|\mathbf{1}\|}\;\;\;\; + \sum_{\scriptsize\shortstack{$n=1$\\ $n\equiv 0$ (mod $L$)}}^N\sqrt{\frac{M+1}{2}}r_n\big(\cos(nt)\gor{\x}_n + \sin(nt) \gor{\y}_n\big)\] is a linear decomposition of $\phi_\tau(t)$ in terms of the orthonormal set \[\left\{\frac{\mathbf{1}}{\|\mathbf{1}\|}, \; \gor{\x}_n , \gor{\y}_n\; \Big|\; 1\leq n \leq N, \; n\equiv 0 \;(\mbox{mod }L)\right\}.\]

\noindent Hence $C\big(\phi_\tau(t)\big) \;\;\;= \sum\limits_{\scriptsize\shortstack{$n=1$\\ $n\equiv 0$ (mod $L$)}}^N\sqrt{\frac{M+1}{2}}r_n\big(\cos(nt)\gor{\x}_n + \sin(nt) \gor{\y}_n\big)$ and therefore

\begin{eqnarray*}
\varphi_\tau(t) &=& \frac{C\big(\phi_\tau(t)\big)}{\big\|C\big(\phi_\tau(t)\big)\big\|} \\
&=& \sum_{\scriptsize\shortstack{$n=1$\\ $n\equiv 0$ (mod $L$)}}^N \frac{r_n}{\sqrt{r_1^2 + \cdots + r_N^2}}\big(\cos(nt)\gor{\x}_n + \sin(nt)\gor{\y}_n\big)
\end{eqnarray*}
which we write as
\[
\varphi_\tau(t) = \sum_{\scriptsize\shortstack{$n=1$\\ $n\equiv 0$ (mod $L$)}}^N \gor{r}_n\big(\cos(nt)\gor{\x}_n + \sin(nt)\gor{\y}_n\big)\;\;\;\;,\;\;\;\; \sum\limits_{n=1}^N \gor{r}_n^2 = 1.
\]
The result follows  from the identities
$r_n = 2\left|\tech{f}(n)\right| = \left|\tech{f}(n)\right|+ \left|\tech{f}(-n)\right|$, $n\geq 1$.
\end{proof}

Theorem \ref{thmStructure} allows us to paint a very clear
geometric picture of the centered and normalized sliding window
point cloud for $S_N f$ (see equation \ref{eqOrthDecomp}). Indeed, if
$S^1(r)\subset \C$ denotes the circle of radius $r$ centered at zero,
then $t\mapsto \varphi_\tau (t)$ can be regarded as the curve in
the $N$-torus \[\mathcal{T} = S^1(\gor{r}_1)\times \cdots \times S^1(\gor{r}_N)\] which
when projected onto  $S^1(\gor{r}_n)$, $\gor{r}_n >0$, goes around
$n$ times at a constant speed. Another interpretation, in terms of flat
(polar) coordinates, is as the image through the quotient map
\[ \R^N = \R \times \cdots \times \R \;\longrightarrow \;\left(\R /\gor{r}_1\Z\right)
\times \cdots\times \left(\R /\gor{r}_N\Z\right) \]
of the line segment in $\R^N$ joining
$(0,0,\ldots, 0)$ and $(\gor{r}_1,2\gor{r}_2,\ldots,N\gor{r}_N)$.
Figure \ref{figFlatCoordinates} depicts $\varphi_\tau(t)$ inside  $\mathcal{T}$ for
$N=3$.

\begin{figure}[htb]
\centering
\includegraphics[scale = .47]{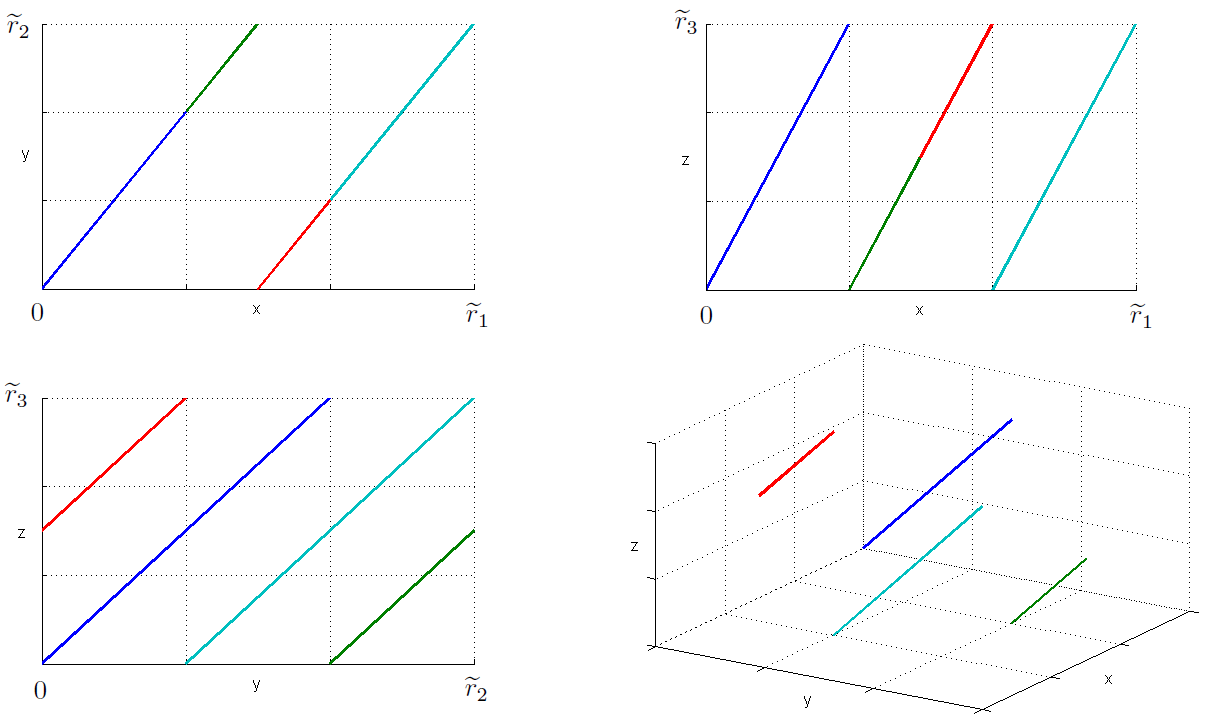}

\caption{The curve $\varphi_\tau(t)$, in colors, with respect to its flat coordinates $(t,2t,3t)\in(\R/\gor{r}_1\Z)\times (\R/\gor{r}_2\Z)\times (\R/\gor{r}_3\Z)$. Please refer to an electronic version for colors. \textbf{Bottom Right:} $\varphi_\tau(t)$ in the fundamental domain $[0,\gor{r}_1)\times [0,\gor{r}_2)\times  [0,\gor{r}_3)$. \textbf{Top Left:} Projection onto the $xy$-plane. \textbf{Top Right:} Projection onto the $xz$-plane. \textbf{Bottom left:} Projection onto the $yz$-plane.}\label{figFlatCoordinates}
\end{figure}

\section{The Persistent Homology of $\varphi_\tau$ and $SW_{M,\tau}f$}\label{secPersHomolTrunc}
The structural observations from the previous section, as well as the
approximation results from Section \ref{approximation}, set the stage for
understanding the persistent homology of the image of $\phi_\tau$
(or rather of
$\varphi_\tau$) and how it relates to that of $SW_{M,\tau} f$.

\subsection{Some convergence results} Let
$T\subset \T$,  and let $SW_{M,\tau}f(T)$ and
$\phi_\tau(T)$ be the images of $T$ through $SW_{M,\tau}f$ and $\phi_\tau$
respectively. An immediate consequence of Proposition \ref{propHausBound}
is that as $N$ (and thus $M =2N$) gets larger, $\phi_\tau(T)$ gets
closer to $SW_{M,\tau} f(T)$ with respect to the Hausdorff metric on
subspaces of $\R^\infty$. Here $\R^\infty$ denotes the set of sequences
$x= (x_k)_{k\in \N}$, $x_k \in \R$, so that $x_n = 0$ for all $n\geq N_0$,
and some $N_0 = N_0(x) \in \N$. We endow $\R^\infty$ with the $L^2$ metric, and regard
$SW_{M,\tau}f(t)$, $t\in T$,  as an element of $\R^\infty$ by identifying
it with \[\big(f(t), f(t+\tau),\ldots, f(t+M\tau),0,0,\ldots\big)\in \R^\infty.\]
Notice, however, that while increasing the dimension $M+1$ of the sliding
window embedding yields better approximations \[SW_{M,\tau} S_N f
(T) \approx SW_{M,\tau} f (T),\] the object being
approximated, $SW_{M,\tau} f(T)$, is changing.
Since $(\R^\infty,\|\cdot\|_2)$ is not complete there is no reason
to believe  this  process converges or stabilizes, even
with a sensible way of comparing, say, $SW_{M,\tau} f(t)$
and $SW_{2M,\frac{\tau}{2}}f(t)$. This is the case, since they are samplings at different rates from the same window.
Perhaps considering the Gromov-Hausdorff distance instead of the
Hausdorff distance would yield such comparison, but at least at the
moment we do not have a natural embedding to make this case.
In addition, even when the metric completion
$\bar {\R^\infty} = \ell^2(\R)$, the space of square-summable
sequences, is well understood, it is also big enough so that
tracking global geometric features requires some work. It is in
situations like this that a succinct and informative summary, such
as persistence diagrams, is critical.

It is known that the space of persistence diagrams is not complete
with respect to the Bottleneck distance, but that it can be completed
by allowing diagrams with countably many points with at most countable
multiplicity, satisfying a natural finiteness condition. See  \cite[Theorem~3.4]{blumberg2012persistent} and \cite[Theorem~6]{mileyko2011probability}. Moreover,
features such as maximum persistence can be easily tracked, and
there is no ambiguity on how to compare the  diagrams
from, say, $SW_{M,\tau} f(T)$ and $SW_{2M,\frac{\tau}{2}} f(T)$.

\begin{proposition} Let $f$ be $L$-periodic,  $N< N'$, $M = 2N$, $M' = 2N'$ and \[ \tau = \frac{2\pi}{L(M+1)} \;\;,\;\; \tau' = \frac{2\pi}{L(M' +1)}.\] If $T\subset \T$ is finite, $Y = SW_{M,\tau} S_N f(T)$ and $Y' = SW_{M',\tau'}S_{N'}f(T)$, then
\[d_{B}\left(\frac{\dgm(Y)}{\sqrt{M+1}},\frac{\dgm(Y')}{\sqrt{M'+1}}\right)\leq  2\Big\|S_Nf - S_{N'}f\Big\|_2\] where $\lambda\cdot \dgm(Z)$ is defined as $\left\{(\lambda x, \lambda y)\;|\; (x,y) \in \dgm(Z)\right\}$, for $\lambda \geq 0$.
\end{proposition}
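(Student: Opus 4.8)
The plan is to rescale both diagrams to a common normalization via the Structure Theorem \ref{thmStructure}, then exhibit an explicit near-isometry between the two rescaled point clouds whose displacement is exactly the discarded Fourier tail. First I would record the effect of dividing by $\sqrt{M+1}$. Since $L(M+1)\tau = 2\pi$, Theorem \ref{thmStructure}(1) shows that on the cloud $Y = \phi_\tau(T)$ the centering map $C$ coincides with the fixed translation $\x \mapsto \x - \tech{f}(0)\mathbf{1}$ (because $\langle\x,\mathbf{1}\rangle/\|\mathbf{1}\|^2 = \tech{f}(0)$ for every $\x\in Y$); translations preserve all pairwise distances and hence the Rips filtration, so $\dgm(Y) = \dgm\big(C(\phi_\tau(T))\big)$. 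By Theorem \ref{thmStructure}(2) the norm $\|C(\phi_\tau(t))\| = \sqrt{M+1}\,\sigma_N$, with $\sigma_N := \big(\|S_N f\|_2^2 - \tech{f}(0)^2\big)^{1/2}$, is independent of $t$, so $C(\phi_\tau(T)) = \sqrt{M+1}\,\sigma_N\cdot\varphi_\tau(T)$. As rescaling a cloud by $\lambda\geq 0$ rescales its diagram by $\lambda$, this gives $\dgm(Y)/\sqrt{M+1} = \dgm\big(\sigma_N\varphi_\tau(T)\big)$, and similarly $\dgm(Y')/\sqrt{M'+1} = \dgm\big(\sigma_{N'}\varphi_{\tau'}(T)\big)$.

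Next I would use the explicit decomposition of Theorem \ref{thmStructure}(3). Since $\sigma_N\gor{r}_n = 2|\tech{f}(n)|$, the rescaled cloud becomes
\[
\sigma_N\varphi_\tau(t) = \sum_{\scriptsize\shortstack{$n=1$\\ $n\equiv 0$ (mod $L$)}}^N 2|\tech{f}(n)|\big(\cos(nt)\gor{\x}_n + \sin(nt)\gor{\y}_n\big),
\]
whose coefficients $2|\tech{f}(n)|$ no longer depend on $N$. Because $\{\gor{\x}_n,\gor{\y}_n \mid n\leq N\}$ and $\{\gor{\x}_n',\gor{\y}_n' \mid n\leq N'\}$ are orthonormal and $M'+1 > M+1$, there is a linear isometry $\R^{M+1}\to\R^{M'+1}$ sending $\gor{\x}_n\mapsto\gor{\x}_n'$, $\gor{\y}_n\mapsto\gor{\y}_n'$ for $n\leq N$. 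It carries $\sigma_N\varphi_\tau(T)$ to an isometric copy $\{P(t):t\in T\}\subset\R^{M'+1}$ (hence with the same diagram), where $P(t)$ is precisely the truncation of $\sigma_{N'}\varphi_{\tau'}(t)$ to its first $N$ harmonics.

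The crux is then a pointwise estimate in the common space $\R^{M'+1}$. Matching $P(t)$ with $\sigma_{N'}\varphi_{\tau'}(t)$ for each $t$, their difference is the tail $\sum_{N<n\le N',\,n\equiv 0\,(L)} 2|\tech{f}(n)|\big(\cos(nt)\gor{\x}_n' + \sin(nt)\gor{\y}_n'\big)$, whose norm by orthonormality is
\[
\left(\sum_{\scriptsize\shortstack{$N<n\le N'$\\ $n\equiv 0$ (mod $L$)}} 4|\tech{f}(n)|^2\right)^{1/2} = \|S_{N'}f - S_N f\|_2,
\]
the last equality following from Remark \ref{rmkLperiodic} (only modes with $n\equiv 0\pmod L$ are nonzero) and orthogonality of distinct Fourier modes, which gives $\|S_{N'}f\|_2^2 - \|S_N f\|_2^2 = \|S_{N'}f - S_N f\|_2^2$. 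As this displacement is constant in $t$ and $t\leftrightarrow t$ is a bijection, the Hausdorff distance between $\{P(t)\}$ and $\sigma_{N'}\varphi_{\tau'}(T)$ is at most $\|S_N f - S_{N'}f\|_2$. Applying the stability inequality \eqref{eqStability} together with the identifications of the first step yields
\[
d_B\!\left(\frac{\dgm(Y)}{\sqrt{M+1}}, \frac{\dgm(Y')}{\sqrt{M'+1}}\right) \leq 2\,d_H \leq 2\|S_N f - S_{N'}f\|_2.
\]

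I expect the main obstacle to be the bookkeeping of the first step: verifying that $C$ acts as a genuine translation on this particular cloud (so it preserves the diagram), that the $t$-independence of $\|C(\phi_\tau(t))\|$ lets one factor out the scalar $\sqrt{M+1}\,\sigma_N$ cleanly, and that the normalization in $\|S_{N'}f - S_N f\|_2^2 = \sum_{N<n\le N'}4|\tech{f}(n)|^2$ is consistent with the convention fixed by Theorem \ref{thmStructure}(3). Once the clouds sit in a common space, the geometric heart is the elementary observation that increasing $N$ only appends new mutually orthogonal harmonic directions, so the displacement between the matched points is exactly the discarded Fourier tail.
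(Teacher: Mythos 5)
Your argument is essentially the paper's proof: both orthogonally project away the harmonics above $N$ (your ``truncation to the first $N$ harmonics'' is the paper's map $P$), identify the remaining low-harmonic span isometrically with the smaller cloud after the $\sqrt{M+1}$ versus $\sqrt{M'+1}$ rescaling (the paper's map $Q$), bound the resulting Hausdorff distance by the discarded Fourier tail, and finish with stability \eqref{eqStability}. The only real difference is that you route through the centered and normalized cloud of Theorem \ref{thmStructure}, whereas the paper works directly with the basis $\u_n,\v_n$ of Proposition \ref{propLperd2WindowSize} and never centers or normalizes; your observation that $C$ restricts to a translation on the cloud (hence preserves the diagram) is correct but is bookkeeping you do not need. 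One constant deserves care: with the Parseval normalization used elsewhere in the paper, $\|g\|_2^2=\sum_n|\tech{g}(n)|^2$, one has $\|S_{N'}f-S_Nf\|_2^2=\sum_{N<|n|\le N'}|\tech{f}(n)|^2=2\sum_{N<n\le N'}|\tech{f}(n)|^2$, so your identity $\sum_{N<n\le N'}4|\tech{f}(n)|^2=\|S_{N'}f-S_Nf\|_2^2$ is off by a factor of $2$; this happens to compensate the coefficient $\gor{r}_n=2|\tech{f}(n)|/\sigma_N$ you quote from Theorem \ref{thmStructure}(3) (the computation inside that theorem's proof actually yields $\gor{r}_n=r_n/\bigl(r_1^2+\cdots+r_N^2\bigr)^{1/2}=\sqrt{2}\,|\tech{f}(n)|/\sigma_N$), so your final bound lands on $2\|S_Nf-S_{N'}f\|_2$ only because two factors of $\sqrt{2}$ cancel. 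Working directly with $r_n=2|\tech{f}(n)|$ and $\|\u_n\|=\|\v_n\|=\sqrt{(M+1)/2}$, as the paper does, keeps the constants transparent.
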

\begin{proof} Let us fix the notation $\u_n = \u_n(M,\tau)$, $\v_n = \v_n(M,\tau)$, $\u'_n = \u_n(M',\tau')$ and $\v'_n = \v_n(M',\tau')$, in order to specify the dependencies of $\u_n$ and $\v_k$ on
$M$ and $\tau$.  Then we have linear maps
\[
\begin{array}{rccl}
P:&\R^{M' + 1} &\longrightarrow &\R^{M'+1} \\
&\sum\limits_{n=0}^{N'} x_n \u'_n + y_n \v'_n& \mapsto& \sum\limits_{n=0}^{N} x_n \u'_n + y_n \v'_n
\end{array}
\]
\\
\[
\begin{array}{rccl}
Q:&Img(P) & \longrightarrow & \R^{M+1} \\
&\u'_n &\mapsto & \sqrt{\frac{M'+1}{M+1}}\u_n \\ \\
&\v'_n & \mapsto & \sqrt{\frac{M'+1}{M+1}}\v_n
\end{array}
\]
which are well defined by Proposition \ref{propLinInd}.
Moreover, Proposition \ref{propLperd2WindowSize} implies that
$P$ can be interpreted as an orthogonal projection
when restricted to $Y'$, and that $Q$ is an isometry on $P(Y')$.
Notice that for every
$y' \in Y'$  \[\|y' - P(y')\|= \sqrt{\frac{M'+1}{2}\sum_{n=N+1}^{N'} r_n^2 }\]
where  $r_n$ is as defined in Remark \ref{rmkLperiodic}, and therefore
\[d_H(Y',P(Y')) \leq \sqrt{\frac{M'+1}{2}\sum_{n=N+1}^{N'} r_n^2}.\] Finally, since
$Q\c P(Y') = \sqrt{\frac{M'+1}{M+1}}Y$ and $\dgm(\;\cdot\;)$ is
invariant under isometries then
\begin{eqnarray*}
\sqrt{M'+1}\cdot d_B\left(\frac{\dgm(Y')}{\sqrt{M'+1}}, \frac{\dgm(Y)}{\sqrt{M+1}}\right) &=& d_B\big(\dgm(Y'), \dgm(Q\c P(Y'))\big) \\
&=& d_B \big(\dgm(Y'),\dgm(P(Y'))\big) \\ \\
&\leq& 2 d_H(Y',P(Y')) \\
&\leq& \sqrt{2(M'+1)\sum_{n=N+1}^{N'} r_n^2}
\end{eqnarray*} and the result follows  from the identity $r_n = 2\left|\tech{f}(n)\right| = \left|\tech{f}(n)\right| + \left|\tech{f}(-n)\right|$, $n\geq 1$.
\end{proof}

This result, paired with the fact that $\|f - S_N f\|_2 \rightarrow 0$ as $N\to \infty$, and the Structure Theorem \ref{thmStructure}(1,2), imply the following:

\begin{corollary}\label{coroCauchyTruncated}
Let $f\in L^2(\T)$ be $L$-periodic,
$N\in \N$, $\tau_N = \frac{2\pi}{L(2N +1)} $, $T\subset \T$ finite, and let $\bar{Y}_N$ be the set resulting from pointwise centering and normalizing  the point cloud
\[ SW_{2N,\tau_N} S_N f(T) \subset \R^{2N +1 }.\]
Then for any field of coefficients, the sequence $\dgm(\bar{Y}_N)$ of persistence diagrams is Cauchy with respect to $d_B$.
\end{corollary}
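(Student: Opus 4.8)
The plan is to reduce the centered-and-normalized diagrams to the un-normalized ones already controlled by the Proposition preceding the corollary, and then to bound a harmless residual scalar. Write $Y_N = SW_{2N,\tau_N}S_N f(T)$ and set $\sigma_N = \left(\|S_N f\|_2^2 - \tech{f}(0)^2\right)^{1/2}$. First I would invoke the Structure Theorem \ref{thmStructure}. Part (1) says every point $\phi_{\tau_N}(t)$ of $Y_N$ carries the \emph{same} component $\tech{f}(0)\,\mathbf{1}$ along $\mathbf{1}$, so the centering map $C$ acts on $Y_N$ as a pure translation: for all $t,s\in T$,
\[
\|\phi_{\tau_N}(t)-\phi_{\tau_N}(s)\| = \|C(\phi_{\tau_N}(t))-C(\phi_{\tau_N}(s))\|.
\]
Hence $C$ restricts to an isometry of $Y_N$ onto $C(Y_N)$ and preserves the persistence diagram. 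Part (2) says the centered points all share the common norm $\rho_N = \sqrt{2N+1}\,\sigma_N$, so normalization is division by the single scalar $\rho_N$. Since the Rips diagram scales linearly with the metric, this yields the key identity $\dgm(\bar{Y}_N) = \rho_N^{-1}\dgm(Y_N) = \sigma_N^{-1}\cdot \frac{\dgm(Y_N)}{\sqrt{2N+1}}$.

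Next, set $D_N = \dgm(Y_N)/\sqrt{2N+1}$, so $\dgm(\bar{Y}_N) = \sigma_N^{-1}D_N$, and recall from the Proposition that $d_B(D_N, D_{N'}) \leq 2\|S_N f - S_{N'}f\|_2$. I would then split by the triangle inequality, using that $d_B$ scales linearly under multiplication of a diagram by a positive scalar:
\[
d_B\left(\sigma_N^{-1}D_N,\ \sigma_{N'}^{-1}D_{N'}\right) \leq \sigma_N^{-1}\,d_B(D_N,D_{N'}) + d_B\left(\sigma_N^{-1}D_{N'},\ \sigma_{N'}^{-1}D_{N'}\right).
\]
The first term is at most $2\,\sigma_N^{-1}\|S_N f - S_{N'}f\|_2$. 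For the second term, note that $\sigma_{N'}^{-1}D_{N'} = \dgm(\bar{Y}_{N'})$ is the diagram of a point cloud lying on the unit sphere, so its death coordinates are bounded by the diameter of $\bar{Y}_{N'}$, hence by $2$; pairing each scaled point with itself then gives $d_B(\sigma_N^{-1}D_{N'},\ \sigma_{N'}^{-1}D_{N'}) \leq 2\left|\sigma_{N'}/\sigma_N - 1\right|$.

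Finally I would use $\|f - S_N f\|_2 \to 0$: the partial sums $S_N f$ are $L^2$-Cauchy, so $\|S_N f - S_{N'}f\|_2 \to 0$ and $\|S_N f\|_2 \to \|f\|_2$, whence $\sigma_N \to \sigma := \left(\|f\|_2^2 - \tech{f}(0)^2\right)^{1/2}$. This limit is strictly positive precisely when $f$ is non-constant, which is exactly the regime in which the normalization $\bar{Y}_N$ is defined; thus $\sigma_N$ is bounded below by a positive constant for large $N$. Both terms above then tend to $0$ as $N,N'\to\infty$, proving $\dgm(\bar{Y}_N)$ is Cauchy in $d_B$.

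The main obstacle, and where the Structure Theorem does all the work, is the first step: recognizing that centering is an isometry on this particular point cloud (so the diagram is unchanged) and that normalization collapses to division by one scalar because $\|C(\phi_{\tau_N}(t))\|$ is independent of $t$. Once $\dgm(\bar{Y}_N) = \sigma_N^{-1}D_N$ is established, the rest is a routine triangle inequality together with the scalar estimate $\left|\sigma_{N'}/\sigma_N - 1\right| \to 0$; the only genuine additional requirement beyond the earlier results is that $\sigma = \lim_N \sigma_N$ stay away from zero, which holds exactly when $f$ is non-constant.
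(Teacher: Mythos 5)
Your argument is correct and follows the same route the paper intends: the paper derives this corollary precisely by combining the preceding proposition's bound on $d_B\bigl(\dgm(Y_N)/\sqrt{2N+1},\,\dgm(Y_{N'})/\sqrt{2N'+1}\bigr)$ with Structure Theorem \ref{thmStructure}(1,2) (centering is a translation on the cloud, normalization is division by the single scalar $\sqrt{2N+1}\,\sigma_N$) and the $L^2$-convergence of the partial sums. Your explicit handling of the residual scalar $\sigma_{N'}/\sigma_N \to 1$ and the observation that one needs $f$ non-constant for the normalization to make sense are details the paper leaves implicit, but they are exactly the right ones.
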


Completeness of the set of (generalized) diagrams \cite[Theorem~3.4]{blumberg2012persistent}, allows one to make the following definition:

\begin{definition} Let $w = \frac{2\pi}{L}$ and denote by  $\dgm_\infty(f,T,w)$ the limit in the Bottleneck distance of the sequence $\dgm(\bar{Y}_N)$.
\end{definition}

We hope the notation $\dgm_\infty(f,T,w)$ is suggestive enough
 to evoke the idea that there exists a limiting  diagram from the
sequence of pointwise-centered and normalized versions of
 $SW_{M,\tau}f(T)$, as $M\to \infty$, and while keeping the window size
 $M\tau = \frac{M}{M+1}w\approx w$.
The  First Convergence Theorem (\ref{thmConvergenceI}) bellow, asserts
the validity of this notation.
Before presenting the proof,  we start with a technical result:

\begin{proposition}\label{propUniLimit} Let $f\in C(\T)$ be $L$-periodic, $N\in \N$ and  $\tau_N = \frac{2\pi}{L(2N+1)}$. Then
\begin{equation}\label{eqUnifLimit}
\lim_{N\to \infty}\frac{\left\|C\big(SW_{2N,\tau_N} f(t)\big)\right\|}{\sqrt{2N +1}} =
\left\|f - \tech{f}(0)\right\|_{2}
\end{equation} uniformly in $t\in \T$.
\end{proposition}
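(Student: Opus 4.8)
The plan is to unwind the centering map into an empirical variance, recognize it as a pair of Riemann sums that the resonance condition $L(2N+1)\tau_N = 2\pi$ renders exact, and then promote pointwise Riemann-sum convergence to uniform convergence using continuity of $f$ on the compact space $\T$. First I would compute the left-hand side explicitly. Writing $x_m = f(t + m\tau_N)$ for $0\leq m\leq 2N$ and $\bar{x}_t = \frac{1}{2N+1}\sum_{m=0}^{2N} x_m$, the map $C$ subtracts the mean of the coordinates, so the $m$-th entry of $C\big(SW_{2N,\tau_N}f(t)\big)$ is $x_m - \bar{x}_t$, and hence
\[
\frac{\left\|C\big(SW_{2N,\tau_N}f(t)\big)\right\|^2}{2N+1} = \frac{1}{2N+1}\sum_{m=0}^{2N} x_m^2 \;-\; \bar{x}_t^{\,2},
\]
the familiar ``mean of squares minus square of mean.''

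Next I would exploit the resonance condition. Since $\tau_N = \frac{2\pi}{L(2N+1)}$, we have $(2N+1)\tau_N = \frac{2\pi}{L}$ \emph{exactly}, so the numbers $0,\tau_N,\ldots,2N\tau_N$ are the left endpoints of a partition of one full period $[0,\frac{2\pi}{L}]$ into the equal subintervals $I_m = [m\tau_N,(m+1)\tau_N]$. Consequently both $\frac{1}{2N+1}\sum_m x_m$ and $\frac{1}{2N+1}\sum_m x_m^2$ are Riemann sums, based at $t$, for the period-averages of $f$ and of $f^2$. Because $f$ (and hence $f^2$) is $L$-periodic, each period-average is independent of the base point and equals the full average over $[0,2\pi]$, so the target limits are $\frac{1}{2\pi}\int_0^{2\pi} f = \tech{f}(0)$ and $\frac{1}{2\pi}\int_0^{2\pi} f^2 = \|f\|_2^2$.

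The crux is uniformity in $t$, and this is precisely where continuity of $f$ on $\T$ enters. For any continuous $L$-periodic $g$ I would write, using that the $I_m$ exactly tile $[0,\frac{2\pi}{L}]$ and that $\frac{L}{2\pi} = \frac{1}{(2N+1)\tau_N}$,
\[
\frac{1}{2N+1}\sum_{m=0}^{2N} g(t+m\tau_N) - \frac{L}{2\pi}\int_0^{2\pi/L} g(t+s)\,ds = \frac{1}{(2N+1)\tau_N}\sum_{m=0}^{2N}\int_{I_m}\big(g(t+m\tau_N) - g(t+s)\big)\,ds.
\]
Given $\epsilon>0$, uniform continuity of $g$ supplies $\delta>0$ with $|g(a)-g(b)|<\epsilon$ whenever $|a-b|<\delta$; for $s\in I_m$ one has $|m\tau_N - s|\leq \tau_N$, so once $\tau_N<\delta$ every integrand is bounded by $\epsilon$ and the whole expression is at most $\epsilon$ \emph{independently of $t$}. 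Applying this to $g=f$ and to $g=f^2$ yields $\bar{x}_t \to \tech{f}(0)$ and $\frac{1}{2N+1}\sum_m x_m^2 \to \|f\|_2^2$, both uniformly in $t$.

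Finally I would assemble the pieces. Since $\bar{x}_t$ is uniformly bounded by $\|f\|_\infty$, uniform convergence of $\bar{x}_t$ forces uniform convergence of $\bar{x}_t^{\,2}$; subtracting, the right-hand side of the variance identity converges uniformly to $\|f\|_2^2 - \tech{f}(0)^2 = \left\|f - \tech{f}(0)\right\|_2^2$. Taking square roots preserves uniformity because $|\sqrt{a}-\sqrt{b}|\leq \sqrt{|a-b|}$ for $a,b\geq 0$, which delivers the stated limit. The main obstacle is the uniform Riemann-sum estimate; the enabling observation that makes it routine is that the resonance condition forces $(2N+1)\tau_N$ to equal a full period exactly rather than merely asymptotically, so that the sample points form an honest partition and a single modulus-of-continuity bound controls the error for all $t$ simultaneously.
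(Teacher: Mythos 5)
Your proof is correct, and it shares the paper's central idea --- the resonance condition $(2N+1)\tau_N = \tfrac{2\pi}{L}$ turns the coordinate sums into Riemann sums over exactly one period, whose value is then independent of the base point $t$ by $L$-periodicity --- but you execute the uniformity step by a genuinely different mechanism. The paper first proves \emph{pointwise} convergence of the mean $c_N(t)\to 0$ (after normalizing $f$ so that $\tech{f}(0)=0$ and $\|f\|_2=1$) and then upgrades to uniform convergence via an equicontinuity-plus-compactness covering argument, repeating the pattern for the centered sum of squares. You instead bound the Riemann-sum error directly by the modulus of continuity: since $|m\tau_N - s|\le\tau_N$ on each cell $I_m$, a single $\delta$ controls the discrepancy for \emph{all} $t$ at once, so uniformity is immediate and you never need the pointwise-then-uniform two-step. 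Combined with your ``mean of squares minus square of mean'' identity for $\|C(\cdot)\|^2/(2N+1)$ (versus the paper's direct treatment of $\sum_n (g(t+n\tau_N)-c_N(t))^2$), this gives a shorter and more quantitative argument --- it even yields an explicit rate in terms of the moduli of continuity of $f$ and $f^2$ --- while the paper's equicontinuity route is a more standard template. Your final assembly is also sound: $\bar{x}_t$ is uniformly bounded by $\|f\|_\infty$, so $\bar{x}_t^2$ converges uniformly, and the inequality $|\sqrt{a}-\sqrt{b}|\le\sqrt{|a-b|}$ transfers uniform convergence through the square root. All identities check out against the paper's normalization $\|f\|_2^2=\tfrac{1}{2\pi}\int_0^{2\pi}f^2$, under which $\|f\|_2^2-\tech{f}(0)^2=\|f-\tech{f}(0)\|_2^2$ as you use.
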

\begin{proof} Since the result is trivially true if $f$ is constant, let us assume
$f \neq \tech{f}(0)$ and let
\[g(t) = \frac{f(t) - \tech{f}(0)}{\left\|f - \tech{f}(0)\right\|_2}\]
It follows that $g\in C(\T)$ is $L$-periodic,
\[\tech{g}(0)  = \frac{1}{2\pi}\int_0^{2\pi} g(t)dt =0\;\; \mbox{ and } \;\; \|g\|_2 = \frac{1}{\sqrt{2\pi}} \left(\int_0^{2\pi} |g(t)|^2 dt \right)^{1/2}=1.\]
Using the identity $L(2N+1)\tau_N= 2\pi$, a Riemann sums argument, and the
fact that $g$ is $L$-periodic, it follows that if
\[c_N(t) = \frac{g(t) + g(t + \tau_N) + \cdots + g(t + 2N\tau_N)}{ 2N+1}\] then for all $t\in \T$
\begin{eqnarray*}
\lim_{N\to \infty} c_N(t)
&=& \lim_{\tau_N \to 0} \frac{L}{2\pi} \Big(\tau_N g(t) + \tau_Ng(t + \tau_N) + \cdots + \tau_N g(t + 2N\tau_N)\Big) \\
&=& \frac{L}{2\pi} \int_{t}^{t + \frac{2\pi}{L}} g(r) dr \\
&=& \frac{1}{2\pi} \int_0^{2\pi} g(r)dr \\
&=& 0.
\end{eqnarray*}
We contend that the convergence $c_N (t)\to 0$ is uniform in $t\in \T$.
Indeed, the fact that $g$ is uniformly continuous implies that the sequence $c_N(t)$ is uniformly equicontinuous.
This means that for every $\epsilon >0$ there exists $\delta>0$ independent of $N$ such that for every $t,t'\in\T$ and  all $N\in \N$  \[\mbox{$|t-t'|<\delta$ implies }|c_N(t) - c_N(t')|<\frac{\epsilon}{2}.\]
Let $N_t\in \N$, for $t\in \T$, be such that
$N\geq N_t$ implies $|c_N(t)| <\frac{\epsilon}{2}$.
These two inequalities together  imply that
if $N\geq N_t$ and  $|t-t'| < \delta$ then $|c_N(t')| < \epsilon$.

By choosing a finite open cover of $[0,2\pi]$ with intervals of length $\delta$ and letting $N_0$  be the maximum of the $N_t$'s corresponding to their centers, we get that $N\geq N_0$ implies $|c_N(t)| <\epsilon$ for all $t\in \T$.
Thus the convergence $c_N(t)\rightarrow 0$ is uniform.
A similar argument shows that
\begin{eqnarray*}
\lim_{N\to \infty}  \frac{\left\|C\big(SW_{2N,\tau_N} g(t)\big)\right\|^2}{2N+1}
&=& \lim_{\tau_N\to 0} \frac{L}{2\pi}\sum_{n=0}^{2N} \tau_N\big(g(t + n\tau_N) - c_N(t)\big)^2 \\ \\
&=& \frac{1}{2\pi} \int_0^{2\pi} g(r)^2 dr =1
\end{eqnarray*}
uniformly in $t\in \T$, and replacing $g$ by $\frac{f -\tech{f}(0)}{\left\|f - \tech{f}(0)\right\|_2}$ yields the result.
\end{proof}

\begin{remark} Notice that an alternative proof of  Proposition \ref{propUniLimit}
follows from combining the Structure Theorem \ref{thmStructure}(2),
Parseval's Theorem, and
 the fact that
\[\| S_N f\|_2^2 - \tech{f}(0)^2 = \left\|S_N\left(f - \tech{f}(0)\right)\right\|^2_2\]
\end{remark}

\begin{theorem}[\textbf{Convergence I}]\label{thmConvergenceI}
Let $f\in C^1(\T)$ be an $L$-periodic function,  $N\in \N$,
$\tau_N = \frac{2\pi}{L(2N +1)}$,
$T\subset \T$ finite, and let $\bar{Y}_N$ be as in Corollary \ref{coroCauchyTruncated}.
Let $\bar{X}_N$ be the set resulting from pointwise centering and normalizing the point cloud
\[SW_{2N,\tau_N} f(T) \subset \R^{2N+1}.\]
Then for any  field of coefficients, the sequence $\dgm(\bar{X}_N)$ of persistence diagrams is Cauchy with respect to $d_B$, and
\[\lim_{N\to \infty} \dgm(\bar{X}_N) = \lim_{N\to \infty} \dgm(\bar{Y}_N) = \dgm_\infty(f,T,w).\]
\end{theorem}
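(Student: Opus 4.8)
The plan is to deduce everything from the behaviour of the truncated clouds, which is already under control. By Corollary \ref{coroCauchyTruncated} the sequence $\dgm(\bar{Y}_N)$ is Cauchy, and by definition its $d_B$-limit is $\dgm_\infty(f,T,w)$. Hence, if I can show that
\[
d_B\big(\dgm(\bar{X}_N),\dgm(\bar{Y}_N)\big)\;\longrightarrow\;0\qquad(N\to\infty),
\]
then the triangle inequality for $d_B$ forces $\dgm(\bar{X}_N)$ to be Cauchy as well and to share the limit $\dgm_\infty(f,T,w)$. The whole theorem thus reduces to controlling, at each fixed $N$, the Bottleneck distance between the two centered-and-normalized clouds.

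For that I would invoke stability (equation \ref{eqStability}), giving $d_B(\dgm(\bar{X}_N),\dgm(\bar{Y}_N))\le 2\,d_H(\bar{X}_N,\bar{Y}_N)$. Abbreviating $x(t)=SW_{2N,\tau_N}f(t)$ and $y(t)=\phi_{\tau_N}(t)=SW_{2N,\tau_N}S_Nf(t)$, with $\bar{x}(t)=C(x(t))/\|C(x(t))\|$ and $\bar{y}(t)=C(y(t))/\|C(y(t))\|$, both clouds are indexed by the same finite parameter set $T$. The resulting diagonal correspondence bounds the Hausdorff distance by $\sup_{t\in T}\|\bar{x}(t)-\bar{y}(t)\|$, so it suffices to show this supremum vanishes uniformly.

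The core estimate proceeds in two steps. First, Proposition \ref{propHausBound} with $k=1$ gives, uniformly in $t$,
\[
\big\|x(t)-y(t)\big\|\;\le\;\sqrt{2}\,\big\|R_Nf'\big\|_2\,\sqrt{\tfrac{2N+1}{N+1}},
\]
and since the centering map $C$ is an orthogonal projection, hence a contraction, the identical bound holds for $\|C(x(t))-C(y(t))\|$. Second, I would apply the elementary inequality $\left\|\frac{u}{\|u\|}-\frac{v}{\|v\|}\right\|\le\frac{2\|u-v\|}{\|u\|}$ (valid for nonzero $u,v$ via the reverse triangle inequality) with $u=C(x(t))$ and $v=C(y(t))$. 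The decisive point is the denominator: Proposition \ref{propUniLimit} gives $\|C(x(t))\|/\sqrt{2N+1}\to\|f-\tech{f}(0)\|_2>0$ uniformly in $t$ (for nonconstant $f$), so for all large $N$ one has $\|C(x(t))\|\ge c\sqrt{2N+1}$ with a fixed $c>0$. The two $\sqrt{2N+1}$ factors then cancel, leaving
\[
\sup_{t\in T}\|\bar{x}(t)-\bar{y}(t)\|\;\le\;\frac{C_0\,\|R_Nf'\|_2}{\sqrt{N+1}}\;\longrightarrow\;0,
\]
since $R_Nf'$ is the Fourier tail of $f'\in L^2(\T)$.

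The main obstacle is exactly this normalization step: dividing by norms that could a priori collapse as $N$ grows. What rescues it is the matched scaling—both $\|C(x(t))\|$ and $\|C(y(t))\|$ grow like $\sqrt{2N+1}$ (the latter exactly, by Theorem \ref{thmStructure}(2)), while their difference is only $O(\|R_Nf'\|_2\sqrt{2N+1})$—so the blow-up from normalizing is precisely absorbed. The uniform-in-$t$ lower bound of Proposition \ref{propUniLimit} is what lets the estimate hold for the entire finite cloud at once. Finally, the degenerate case of constant $f$ is trivial, since then $S_Nf=f$ and hence $\bar{X}_N=\bar{Y}_N$ for every $N$.
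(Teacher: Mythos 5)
Your proposal is correct and takes essentially the same route as the paper: both reduce the theorem to showing $d_H(\bar{X}_N,\bar{Y}_N)\to 0$ and then combine stability with Corollary \ref{coroCauchyTruncated}, using Proposition \ref{propHausBound} (with $k=1$) for the un-normalized difference, the contraction property of $C$, and the uniform lower bound from Proposition \ref{propUniLimit} to absorb the normalization. The only difference is bookkeeping: the paper passes through the intermediate rescaled clouds $X_N/\sqrt{2N+1}$ and $X_N/(\sqrt{2N+1}\,\|S_Nf\|_2)$ together with the exact identity $\bar{Y}_N=Y_N/(\sqrt{2N+1}\,\|S_Nf\|_2)$ from Theorem \ref{thmStructure}(2), whereas you use the elementary estimate $\left\|\tfrac{u}{\|u\|}-\tfrac{v}{\|v\|}\right\|\leq \tfrac{2\|u-v\|}{\|u\|}$ directly.
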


\begin{proof}
To prove Theorem \ref{thmConvergenceI} we will
use the Approximation Theorem \ref{thmApproximation} to  show that
\[\lim\limits_{N\to \infty} d_B\Big(\dgm(\bar{X}_N),\dgm(\bar{Y}_N)\Big) =0\]
and combine this with Corollary \ref{coroCauchyTruncated} to obtain the result.

Assume without loss of generality that $f$ satisfies $\tech{f}(0)=0$
and $\|f\|_2 =1$.
Let $X_N $ and $Y_N $ be the resulting sets from pointwise centering the point clouds $SW_{2N,\tau_N} f(T)$ and $SW_{2N,\tau_N} S_Nf(T)$, respectively.
Using the uniform convergence in equation \ref{eqUnifLimit} we get that
\[\lim\limits_{N\to \infty} d_H \left(\bar{X}_N, \frac{X_N}{\sqrt{2N +1}}\right)= 0\] and moreover, since $\lim\limits_{N\to \infty} \|S_N f\|_2 = \|f\|_2 =1$ then
\[\lim_{N\to \infty} d_H \left(\frac{X_N}{\sqrt{2N +1}},\frac{X_N}{\sqrt{2N+1}\|S_Nf\|_2}\right)=0.\]
Now, from the Structure Theorem \ref{thmStructure}(2) we have the identity \[\bar{Y}_N = \frac{Y_N}{\sqrt{2N+1}\|S_Nf\|_2}\] and using the Approximation Theorem \ref{thmApproximation}(1), along with the fact that $C$ is distance non-increasing, we conclude that
\[\lim_{N\to \infty} d_H\left(\frac{X_N}{\sqrt{2N+1}\|S_N f\|_2},\bar{Y}_N\right) \leq \lim_{N\to \infty} \frac{\sqrt{2}\cdot\|R_N f^\prime\|_2}{\|S_N f\|_2\cdot\sqrt{N+1}} =0.\]
The triangular inequality then implies that $\lim\limits_{N\to \infty} d_H(\bar{X}_N,\bar{Y}_N)=0$ and the result follows from combining the stability of $d_B$ with respect to $d_H$, and Corollary \ref{coroCauchyTruncated}.
\end{proof}


The first convergence theorem asserts that for each choice of discretization
$T\subset \T$ one obtains a limiting diagram $\dgm_\infty(f,T,w)$,
by letting $N\to \infty$ in the pointwise centered and normalized versions of
either $SW_{2N,\tau_N} S_N f(T)$ or $SW_{2N,\tau_N} f(T)$.
Next we will show that there is also convergence when $T$ tends to $\T$,
with respect to the Hausdorff distance on subspaces of $\T$.

\begin{theorem}[\textbf{Convergence II}]\label{thmConvergenceII} Let $T,T^\prime \subset \T$ be finite,
and let $f\in C^1(\T)$  be $L$-periodic with modulus of continuity
$\omega:[0,\infty]\longrightarrow [0,\infty]$.
If $w = \frac{2\pi}{L}$
then
\[
d_B \big(\dgm_\infty(f,T,w), \dgm_\infty(f,T^\prime,w) \big)
\leq
2\left\|f - \tech{f}(0)\right\|_2\omega\left(d_H\big(T,T^\prime\big)\right)
\]
and thus there exists a persistence diagram $\dgm_\infty(f,w)$ so that
\[\lim\limits_{T\to \T} \dgm_\infty(f,T,w) = \dgm_\infty(f,w).\]
\end{theorem}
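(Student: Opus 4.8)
The plan is to reduce the inequality to stability of persistence diagrams plus a single Hausdorff estimate between the normalized point clouds, and then to obtain the limit diagram $\dgm_\infty(f,w)$ from completeness. First I would fix finite approximations: by the First Convergence Theorem \ref{thmConvergenceI}, $\dgm_\infty(f,T,w)=\lim_{N\to\infty}\dgm(\bar X_N)$ and $\dgm_\infty(f,T',w)=\lim_{N\to\infty}\dgm(\bar X'_N)$ in the Bottleneck distance, where $\bar X_N$ and $\bar X'_N$ are the pointwise centered-and-normalized clouds obtained from $SW_{2N,\tau_N}f$ over $T$ and $T'$; crucially both sit in the same $\R^{2N+1}$. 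Since $d_B$ is a metric, the triangle inequality gives
\[
d_B\big(\dgm_\infty(f,T,w),\dgm_\infty(f,T',w)\big)\;\leq\;\liminf_{N\to\infty}d_B\big(\dgm(\bar X_N),\dgm(\bar X'_N)\big),
\]
so it suffices to bound the right-hand diagrams uniformly in $N$ and pass to the limit. By the stability inequality \ref{eqStability}, $d_B(\dgm(\bar X_N),\dgm(\bar X'_N))\leq 2\,d_H(\bar X_N,\bar X'_N)$, which reduces everything to estimating the Hausdorff distance between the two normalized clouds.

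For that estimate I would work pointwise. Given $t\in T$, choose $t'\in T'$ with circle-distance $|t-t'|\leq d_H(T,T')$ (and symmetrically). Writing $a=C(SW_{2N,\tau_N}f(t))$ and $b=C(SW_{2N,\tau_N}f(t'))$, the normalized points are $a/\|a\|$ and $b/\|b\|$, and I would invoke the elementary bound $\left\|\frac{a}{\|a\|}-\frac{b}{\|b\|}\right\|\leq \frac{2\|a-b\|}{\|a\|}$. Two facts control the two pieces. For the numerator, corresponding coordinates of $SW_{2N,\tau_N}f(t)$ and $SW_{2N,\tau_N}f(t')$ carry the \emph{same} time-shift $m\tau_N$, so each coordinate difference is at most $\omega(|t-t'|)$; since $C$ is a distance non-increasing orthogonal projection, $\|a-b\|\leq \sqrt{2N+1}\,\omega(d_H(T,T'))$. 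For the denominator, the norm factor $\|f-\tech{f}(0)\|_2$ enters precisely through the normalization, because Proposition \ref{propUniLimit} gives $\|a\|/\sqrt{2N+1}\to\|f-\tech{f}(0)\|_2$ \emph{uniformly} in $t$. The factors $\sqrt{2N+1}$ then cancel, leaving a bound on the normalized difference of the form $c\,\omega(d_H(T,T'))$ with $c$ governed by $\|f-\tech{f}(0)\|_2$, uniform in $t,t'$ and in $N$ for $N$ large. Taking suprema bounds $d_H(\bar X_N,\bar X'_N)$, and letting $N\to\infty$ yields the asserted inequality.

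For the existence statement I would pick any sequence of finite sets $T_k\subset\T$ with $d_H(T_k,\T)\to 0$. Because the Hausdorff distance satisfies the triangle inequality, $d_H(T_k,T_l)\leq d_H(T_k,\T)+d_H(\T,T_l)\to 0$, and since $f\in C^1(\T)$ is uniformly continuous its modulus satisfies $\omega(\delta)\to 0$ as $\delta\to 0$; hence the inequality just proved shows that $\big(\dgm_\infty(f,T_k,w)\big)_k$ is Cauchy in $d_B$. By completeness of the space of generalized persistence diagrams \cite[Theorem~3.4]{blumberg2012persistent} it converges to a diagram $\dgm_\infty(f,w)$, and the same inequality shows the limit is independent of the chosen sequence, so that $\lim_{T\to\T}\dgm_\infty(f,T,w)=\dgm_\infty(f,w)$ is well defined.

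The main obstacle I anticipate is the uniformity needed in the Hausdorff estimate: the bound on $d_H(\bar X_N,\bar X'_N)$ must \emph{not} degrade as the ambient dimension $2N+1$ grows, or the passage to the limit $N\to\infty$ collapses. This is exactly why two ingredients must be combined — the coordinatewise modulus estimate, which contributes precisely the factor $\sqrt{2N+1}$ that cancels the one produced by normalization, and the \emph{uniform}-in-$t$ limit of Proposition \ref{propUniLimit}, which keeps the normalizing denominator bounded away from zero uniformly in both $t$ and $N$. A merely pointwise version of Proposition \ref{propUniLimit} would be insufficient, since the constant multiplying $\omega(d_H(T,T'))$ could then blow up with $N$.
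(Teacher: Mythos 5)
Your proposal follows the paper's proof essentially step for step: reduce to the finite-$N$ diagrams via the First Convergence Theorem, bound $d_B\big(\dgm(\bar{X}_N),\dgm(\bar{X}_N^\prime)\big)$ by stability plus a pointwise Hausdorff estimate combining the coordinatewise bound $\|\x_N-\x_N^\prime\|\le\sqrt{2N+1}\,\omega(|t-t^\prime|)$, the distance-non-increasing property of $C$, and the uniform limit of Proposition \ref{propUniLimit}; then invoke completeness of generalized diagrams for the existence of $\dgm_\infty(f,w)$. Your emphasis on uniformity in $t$ and on the cancellation of the $\sqrt{2N+1}$ factors is exactly the crux the paper relies on.

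The one point to watch is the constant. Your elementary bound $\bigl\|\frac{a}{\|a\|}-\frac{b}{\|b\|}\bigr\|\le\frac{2\|a-b\|}{\|a\|}$ together with $\|a\|/\sqrt{2N+1}\to\|f-\tech{f}(0)\|_2$ places $\|f-\tech{f}(0)\|_2$ in the \emph{denominator} of the resulting estimate, yielding roughly $d_B\le 4\,\omega\big(d_H(T,T^\prime)\big)/\|f-\tech{f}(0)\|_2$ rather than the stated $2\|f-\tech{f}(0)\|_2\,\omega\big(d_H(T,T^\prime)\big)$. This is in fact the scale-invariant form one should expect, since the centered-and-normalized clouds are unchanged when $f$ is multiplied by a constant; the paper's own chain of inequalities produces the stated constant only under the implicit normalization $\|f-\tech{f}(0)\|_2=1$. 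The discrepancy is harmless for the qualitative conclusions --- the Cauchy property and the existence of $\lim_{T\to\T}\dgm_\infty(f,T,w)$ both survive --- but as written your argument does not literally reproduce the displayed inequality.
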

\begin{proof}
Fix $t\in T $ and $t^\prime \in T^\prime$. If we let $\x_N = SW_{2N,\tau_N} f(t)$,
$\x_N^\prime = SW_{2N,\tau_N} f(t^\prime)$, $\tau_N = \frac{2\pi}{L(2N+1)}$ and $\lambda=\left\|f - \tech{f}(0) \right\|_2$
then
\begin{eqnarray*}
\left\|\frac{C(\x_N)}{\|C(\x_N)\|} - \frac{C(\x_N^\prime)}{\|C(\x_N^\prime)\|} \right\|
&\leq&
\left\|\frac{C(\x_N)}{\|C(\x_N)\|} - \frac{\lambda C(\x_N)}{\sqrt{2N+1}}\right\| \;
+ \;
\frac{\lambda\left\|C(\x_N) - C(\x_N^\prime)\right\|}{\sqrt{2N+1}}\;
\\ \\&& +
\left\|\frac{C(\x_N^\prime)}{\|C(\x_N^\prime)\|} - \frac{\lambda C(\x_N^\prime)}{\sqrt{2N+1}}\right\|
\end{eqnarray*}
It follows from Proposition \ref{propUniLimit} that both the summand
\[
\left\|\frac{C(\x_N)}{\|C(\x_N)\|} - \frac{\lambda C(\x_N)}{\sqrt{2N+1}}\right\|
= \frac{\|C(\x_N)\|}{\sqrt{2N+1}}\cdot \left|\frac{\sqrt{2N+1}}{\|C(\x_N)\|} - \lambda \right|
\]
and its version with $\x^\prime_N$, go to zero as $N\to \infty$.
Thus given $\epsilon > 0$ there exists
$N_0 \in \N$ so that $N\geq N_0$ implies
\begin{eqnarray*}
\left\|\frac{C(\x_N)}{\|C(\x_N)\|} - \frac{C(\x_N^\prime)}{\|C(\x_N^\prime)\|} \right\|
&\leq&  \frac{\epsilon}{2} +  \frac{\lambda \left\|C(\x_N) - C(\x^\prime_N)\right\|}{\sqrt{2N+1}} \\ \\
&\leq& \frac{\epsilon}{2} + \frac{\lambda\|\x_N - \x_N^\prime\|}{\sqrt{2N+1}} \\ \\
&=& \frac{\epsilon}{2} + \lambda \left(\sum_{n=0}^{2N}\frac{\big|f(t + n\tau_N) - f(t^\prime + n\tau_N)\big|^2}{2N+1}\right)^{1/2} \\ \\
&\leq& \frac{\epsilon}{2} + \lambda \omega(|t - t^\prime|).
\end{eqnarray*}
Let $\bar{X}_N$ and $\bar{X^\prime}_N$ be the sets resulting from pointwise centering
and normalizing $SW_{2N,\tau_N} f(T)$ and $SW_{2N,\tau_N} f(T^\prime)$, respectively.
Since the estimates above are uniform in $t$ and $t^\prime$ (by Proposition \ref{propUniLimit}), it follows that whenever $N\geq N_0$ then
\[d_H\big(\bar{X}_N , \bar{X^\prime}_N\big) \leq \frac{\epsilon}{2} + \lambda \omega\big(d_H(T,T^\prime)\big).\]
Notice that the Hausdorff distance on the left hand side is for subspaces of
$\R^{2N+1}$, while the one on the right is between subspaces of $\T$.

Applying the Stability Theorem for persistence diagrams yields
\[d_B\big(\dgm(\bar{X}_N), \dgm(\bar{X^\prime}_N)\big)
\leq \epsilon + 2\lambda \omega\big(d_H(T,T^\prime)\big)\]
which by letting $N\to \infty$ and applying the First Convergence Theorem (\ref{thmConvergenceI}), implies
\[d_B\big(\dgm_\infty(f,T,w),\dgm_\infty(f,T^\prime,w)\big) \leq \epsilon
+ 2\lambda\omega\big(d_H(T,T^\prime)\big).\]
Since this is true for any $\epsilon >0$, letting $\epsilon \downarrow 0$ yields
the first part of the theorem.
The existence of $\dgm_\infty(f,w)$ follows from the fact that the set of
generalized persistence diagrams is complete with respect to $d_B$.
\end{proof}

\subsection{A lower bound for maximum persistence}

The Structure Theorem \ref{thmStructure}(3) and the fact that orthogonal projections are
distance non-increasing, allow us to now prove the following:

\begin{theorem}\label{thmWindSize2MaxPers} Let $f\in C^1(\T)$ be an $L$-periodic function, $N\in\N$, $M\geq 2N$,
$L(M+1)\tau = 2\pi$ and let $T\subset \T$ be finite. Furthermore, assume that $d_H(T,\T) <\delta$
for some (see Theorem \ref{thmStructure}(3))
\[
0<\delta <  \max_{1\leq n\leq N} \frac{\sqrt{3}\gor{r}_n}{\kappa_N}\hspace{.2cm}, \hspace{.5cm}
\mbox{where }\;\;\kappa_N = \frac{2\sqrt{2}\left\|S_N f^\prime\right\|_2}{\left\|S_N \left(f - \tech{f}(0)\right)\right\|_2}
\]
Let $\bar{Y} = \bar{Y}_N$ be the set resulting from pointwise centering and normalizing the point
cloud \[SW_{M,\tau}S_N f(T) \subset \R^{M+1},\]
and let $p>N$ be a prime.
If $\dgm(\bar{Y})$ denotes the 1-dimensional $\F_p$-persistence diagram for the Rips
 filtration on $\bar{Y}$, then $\varphi_\tau$  yields an element
 $\x_\varphi \in \dgm(\bar{Y})$  with
\begin{enumerate}
\item $\birth(\x_\varphi) \leq \delta \kappa_N$
\vskip .1in
\item $\death(\x_\varphi) \geq \sqrt{3} \max\limits_{1\leq n \leq N} \gor{r}_n$
\vskip .1in
\end{enumerate}
and therefore \begin{equation}\label{eqMaxPersBound}
mp\Big(\dgm(\bar{Y})\Big)\geq \left(\sqrt{3} \max_{1\leq n \leq N} \gor{r}_n\right) - \delta \kappa_N
\end{equation}

\end{theorem}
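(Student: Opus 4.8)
The plan is to exhibit a single persistence point $\x_\varphi$ coming from the curve $\varphi_\tau$ and to bound its birth from above and its death from below, the two estimates being linked through the scalar $\kappa_N$ and the dominant radius $\max_n \gor{r}_n$. The starting point is the orthonormal decomposition of the Structure Theorem \ref{thmStructure}(3), which realizes $\varphi_\tau$ as a constant--speed curve on the torus $S^1(\gor{r}_1)\times\cdots\times S^1(\gor{r}_N)$. First I would record the computation that ties $\kappa_N$ to the geometry of this curve: differentiating \eqref{eqOrthDecomp} and using orthonormality of the $\gor{\x}_n,\gor{\y}_n$ gives $\|\varphi_\tau^\prime(t)\|^2=\sum_{n} n^2\gor{r}_n^2$, constant in $t$; combining $\gor{r}_n = 2|\tech{f}(n)|/\sqrt{\|S_N f\|_2^2-\tech{f}(0)^2}$ with Parseval and Remark \ref{rmkLperiodic} (which forces $\tech{f}(n)=0$ unless $n\equiv 0$ mod $L$) then yields
\[
\|\varphi_\tau^\prime(t)\| \;=\; \frac{\sqrt{2}\,\|S_N f^\prime\|_2}{\|S_N(f-\tech{f}(0))\|_2} \;=\; \frac{\kappa_N}{2}.
\]

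For the birth bound I would use density. Since $d_H(T,\T)<\delta$, the cyclically ordered points of $T$ have consecutive gaps at most $2\delta$, so the constant--speed estimate gives $\|\varphi_\tau(t_i)-\varphi_\tau(t_{i+1})\|\le \tfrac{\kappa_N}{2}\cdot 2\delta=\delta\kappa_N$. Hence at scale $\delta\kappa_N$ every edge of the inscribed polygon on $\bar{Y}$ is present, and the associated $1$--cycle $\x_\varphi$ already lives in $R_{\delta\kappa_N}(\bar{Y})$, giving $\birth(\x_\varphi)\le \delta\kappa_N$.

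The death bound is the heart of the argument, and the main obstacle. Let $n^\ast=\argmax_{1\le n\le N}\gor{r}_n$ and let $\pi$ be the orthogonal projection of $\R^{M+1}$ onto $\mathrm{Span}\{\gor{\x}_{n^\ast},\gor{\y}_{n^\ast}\}$, so that $\pi\circ\varphi_\tau(t)$ runs along the circle of radius $\gor{r}_{n^\ast}$ at angle $n^\ast t$. Because $\pi$ is $1$--Lipschitz it induces simplicial maps on Rips complexes, and for $\ell<\sqrt{3}\,\gor{r}_{n^\ast}$ any edge of $R_\ell(\bar{Y})$ projects to a chord of length $<\sqrt{3}\,\gor{r}_{n^\ast}$, i.e.\ to a pair of points spanning a short arc of angle $<2\pi/3$. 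I would then build a circle--valued map $g$ on the $2$--skeleton by sending each vertex to its angle and each edge to the corresponding short arc; the crucial point is that for any triangle of $R_\ell(\bar{Y})$ the three short--arc increments lie in $(-2\pi/3,2\pi/3)$ and sum to a multiple of $2\pi$, hence to $0$, so $g$ extends over every $2$--simplex. The induced map $g_\ast\colon H_1\big(R_\ell(\bar{Y});\F_p\big)\to H_1(S^1;\F_p)=\F_p$ sends $\x_\varphi$ to its total winding $n^\ast$, which is nonzero in $\F_p$ precisely because $p>N\ge n^\ast$. Therefore $\x_\varphi$ is nontrivial for all $\ell<\sqrt{3}\,\gor{r}_{n^\ast}$, giving $\death(\x_\varphi)\ge \sqrt{3}\,\max_{1\le n\le N}\gor{r}_n$.

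Finally I would assemble the two bounds. Compatibility of $g$ across the inclusions $R_{\delta\kappa_N}(\bar{Y})\hookrightarrow R_\ell(\bar{Y})$ shows that the class born by scale $\delta\kappa_N$ has nonzero image under $g_\ast$ throughout $[\delta\kappa_N,\sqrt{3}\max_n\gor{r}_n)$ --- here the hypothesis $\delta<\max_n \sqrt{3}\,\gor{r}_n/\kappa_N$ guarantees this interval is nonempty --- so $\dgm(\bar{Y})$ contains a point $\x_\varphi$ with $\birth(\x_\varphi)\le\delta\kappa_N$ and $\death(\x_\varphi)\ge\sqrt{3}\max_n\gor{r}_n$, whence
\[
mp\big(\dgm(\bar{Y})\big)\;\ge\;\pers(\x_\varphi)\;=\;\death(\x_\varphi)-\birth(\x_\varphi)\;\ge\;\Big(\sqrt{3}\max_{1\le n\le N}\gor{r}_n\Big)-\delta\kappa_N.
\]
The delicate bookkeeping sits in the death step: verifying that the inscribed polygon genuinely carries winding number $n^\ast$ under $\pi$ (which requires the gaps of $T$ small enough that each projected edge advances the angle monotonically), and justifying the $2\pi/3$ arc threshold that produces the sharp constant $\sqrt{3}$.
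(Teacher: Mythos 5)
Your proposal is correct and follows essentially the same route as the paper: the birth bound comes from the constant--speed estimate giving consecutive edge lengths at most $\delta\kappa_N$ (your identity $\|\varphi_\tau^\prime\|=\kappa_N/2$ is exactly the paper's chord computation via the Taylor bound on $\cos$), and the death bound comes from the $1$-Lipschitz orthogonal projection onto the dominant plane spanned by $\gor{\x}_{n^\ast},\gor{\y}_{n^\ast}$ together with the invertibility of $n^\ast$ in $\F_p$ for $p>N$. The only real difference is that where the paper factors through the Rips filtration of the honest circle $S^1(\gor{r}_{n^\ast})$ and asserts that the inscribed polygon class survives below scale $\sqrt{3}\,\gor{r}_{n^\ast}$, you certify this directly with the short-arc/winding-number map; the bookkeeping you flag is in fact already guaranteed by the hypotheses, since $\kappa_N/2=\big(\sum_n n^2\gor{r}_n^2\big)^{1/2}\ge n^\ast\gor{r}_{n^\ast}$ and $\delta\kappa_N<\sqrt{3}\,\gor{r}_{n^\ast}$ force $2\delta n^\ast<\sqrt{3}<2\pi/3$, so each projected edge advances the angle by its true increment $n^\ast(t_{j+1}-t_j)$.
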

\begin{proof}  Given the linear decomposition
\[\varphi_\tau(t) \;\;\;= \sum_{\scriptsize\shortstack{$n=1$\\ $n\equiv 0$ (mod $L$)}}^N\gor{r}_n\big(\cos(nt)\gor{\x}_n + \sin(nt)\gor{\y}_n\big)\] of $\varphi_\tau(t)$ with respect to the orthonormal set $\big\{\gor{\x}_n,\gor{\y}_n\;|\; 1\leq n\leq N, \; n\equiv 0 \;(\mbox{mod  }L)\big\}$
described in the proof of Theorem \ref{thmStructure}(3), it follows that
\[
\begin{array}{rccl}
P_n : &\bar{Y} &\longrightarrow &\C \\
&\varphi_\tau(t)&\mapsto& \gor{r}_n e^{int}
\end{array}
\]
can be regarded as the restriction to $\bar{Y}$ of the orthogonal projection from $\R^{M+1}$ onto $Span\{\gor{\x}_n,\gor{\y}_n\}$.
Since orthogonal projections are linear and norm-non-increasing, then  $\|P_n(\x) - P_n(\y)\| \leq \|\x - \y\|$ for every $\x,\y \in \bar{Y}$.
Thus, if \[ S^1(\gor{r}_n) = \{ \gor{r}_n e^{int}\;|\; t\in T\}\]  it follows that $P_n$ induces  simplicial maps
\[
\begin{array}{rccl}
P_{n_\sharp}: &R_\epsilon \left(\bar{Y}\right)& \longrightarrow &R_\epsilon\left(S^1(\gor{r}_n)\right)  \\
&[\x_0,\ldots,\x_k]&\mapsto& [P_n(\x_0),\ldots, P_n(\x_k)]
\end{array}
\]
for every $\epsilon >0$, which in turn yield homomorphisms
\[P_{n_*} : H_k\left(R_\epsilon \left(\bar{Y}\right);\F_p\right) \longrightarrow H_k\left(R_\epsilon\left(S^1(\gor{r}_n)\right);\F_p\right)\]  of $\F_p$-vector spaces at the homology level.
What we contend is that, via the homomorphisms $P_{n_*}$, the maximum 1-dimensional persistence of $\bar{Y}$ can be bounded below by that of $S^1(\gor{r}_n)$.
Indeed, let $\epsilon_1,\epsilon_2>0$ be so that \[\delta \kappa_N < \epsilon_1 < \epsilon_2 < \sqrt{3}\gor{r}_m\] where $m = \arg\max\;\{\gor{r}_n\;|\; 1\leq n \leq N\}$.
If we write \[T = \{t_0 < t_2 < \cdots <  t_J\}\] it follows from $d_H(T,\T)< \delta$ that $|t_{j} - t_{j-1} | < 2\delta$ for all $j= 1, \ldots, J$, and therefore
\begin{eqnarray*}
\|\varphi_\tau(t_j) - \varphi_\tau(t_{j-1})\|^2 &=& \sum_{\scriptsize\shortstack{$n=1$ \\ $n\equiv 0$ (mod $L$)}}^N 2\gor{r}_n^2\Big(1 - \cos\big(n(t_{j} - t_{j-1})\big)\Big) \\
&\leq&\sum_{\scriptsize\shortstack{$n=1$ \\ $n\equiv 0$ (mod $L$)}}^N \gor{r}_n^2\big(n(t_j - t_{j-1})\big)^2 \\
&=& (t_j - t_{j-1})^2\sum_{n=1}^N \frac{4n^2\left|\tech{f}(n)\right|^2}
{\|S_N f\|^2_2 - \tech{f}(0)^2} \\  \\
&=& \frac{(t_j - t_{j-1})^2}{\left\|S_N \left(f - \tech{f}(0)\right )\right\|^2_2} \sum_{1\leq |n| \leq N} 2\left|\tech{f^\prime}(n)\right|^2 \\ \\
&\leq& 8\delta^2\frac{\left\|S_N f^\prime\right\|^2_2}
{\left\|S_N \left(f - \tech{f}(0)\right )\right\|^2_2}\\ \\
&=& (\delta \kappa_N)^2
\end{eqnarray*} The first inequality  is a consequence of the Taylor expansion for $\cos(x)$ around zero, and $f^\prime$ denotes the first derivative of $f$.
Therefore \[\nu = [\varphi_\tau(t_0), \varphi_\tau(t_1)] + \cdots + [\varphi_\tau(t_{J-1}),\varphi_\tau(t_J) ] + [\varphi_\tau(t_J),\varphi_\tau(t_0)]\]
is a 1-dimensional cycle on $R_{\epsilon_1}(\bar{Y})$, and we obtain the homology class \[P_{m_*}([\nu]) \in H_1\left(R_{\epsilon_1}\left( S^1(\gor{r}_m)\right);\F_p\right).\]

Let
$ \{\theta_0 < \theta_1 <\cdots < \theta_{J_m}\} = \left\{t \mbox{ mod }\frac{2\pi}{m}\;|\; t\in T\right\}$
and let $c_j = \gor{r}_m e^{im \theta_j}$.
It follows from a similar calculation that
\[
\|c_j - c_{j-1}\|^2
\;\;\leq \;\;
(\theta_j - \theta_{j-1})^2 \frac{4\left|\tech{f^\prime}(m)\right|^2}
{\left\|S_N \left(f - \tech{f}(0)\right)\right\|^2_2}
\;\;\leq \;\;
(\delta\kappa_N)^2
\]
and therefore the 1-cycle
\[\mu = [c_0,c_1 ] + \cdots + [c_{J_m-1},c_{J_m}] + [c_{J_m}, c_0]\]
is so that its homology class
$[\mu] \in H_1\left(R_{\epsilon_1}\left(S^1(\gor{r}_m)\right);\F_p\right)$
satisfies $i_*([\mu]) \neq 0$, where $i_*$ is the homomorphism induced by the inclusion
\[i: R_{\epsilon_1}\left(S^1(\gor{r}_m)\right) \hookrightarrow R_{\epsilon_2}\left(S^1(\gor{r}_m)\right).\]
Since $P_{m_*}([\nu]) = m[\mu]$,
and given that $1\leq m\leq N < p$ implies that $m$ is invertible in $\F_p$, then
$i_*\circ P_{m_*}([\nu]) \neq 0$.
From the commutativity of the diagram
\[
\begin{diagram}
\node{H_1(R_{\epsilon_1}(\bar{Y});\F_p)} \arrow{e,t}{i_*} \arrow{s,l}{P_{m_*}}
\node{H_1(R_{\epsilon_2}(\bar{Y});\F_p)} \arrow{s,r}{P_{m_*}}\\
\node{H_1\left(R_{\epsilon_1}\left(S^1(\gor{r}_m)\right);\F_p\right)}\arrow{e,b}{i_*}
\node{H_1\left(R_{\epsilon_2}\left(S^1(\gor{r}_m)\right);\F_p\right)}
\end{diagram}
\]
we conclude that $i_*([\nu]) \neq 0$, and thus $[\nu]$ yields an element $\x_\varphi \in \dgm(\bar{Y})$ so that
\[\birth(\x_\varphi) \leq  \epsilon_1\;\;\; \mbox{ and }\;\;\; \death(\x_\varphi) \geq \epsilon_2.\]
Given that this is true for every $\epsilon_1> \delta \kappa_N$ and every $\epsilon_2 < \sqrt{3}\gor{r}_m$, letting $\epsilon_1 \downarrow \delta \kappa_N $ and $\epsilon_2 \uparrow \sqrt{3}\gor{r}_m$ concludes the proof.
\end{proof}

\begin{remark} It is worth noting that in the proof of
Theorem \ref{thmWindSize2MaxPers} one can
replace $\F_p$, $p> N$, by the field of rational numbers $\Q$.
That is, the estimated bound for maximum persistence is valid for all $N\in \N$
and homology with $\Q$ coefficients.
\end{remark}

Equation \ref{eqMaxPersBound}, together with the Convergence Theorems  I (\ref{thmConvergenceI}) and II (\ref{thmConvergenceII}), imply:

\begin{corollary} Let $f\in C^1(\T)$ be an $L$-periodic function
so that $\tech{f}(0)= 0$ and $\|f\|_2 = 1$.
Let $T\subset \T$ be finite and so that $d_H(T,\T) < \delta$ for some

\[0 < \delta < \frac{\sqrt{3}}{ \sqrt{2}\| f^\prime \|_2}\max_{n\in \N}  \left|\tech{f}(n)\right| \]
Then with $\Q$ coefficients, the 1-dimensional persistence diagram $\dgm_\infty(f,T,w)$
satisfies  \[\frac{1}{2}mp\Big(\dgm_\infty(f,T,w)\Big) \geq \sqrt{3}\max\limits_{n\in \N} \left|\tech{f}(n)\right| - \sqrt{2}\delta\|f^\prime\|_2\]
and therefore
\[mp\Big(\dgm_\infty(f,w)\Big) \geq 2\sqrt{3}\max\limits_{n\in \N} \left|\tech{f}(n)\right|.\]
\end{corollary}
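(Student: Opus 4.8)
The plan is to produce the first inequality by applying the maximum-persistence lower bound of Theorem \ref{thmWindSize2MaxPers} to each truncation $S_N f$ and then passing to the $N\to\infty$ limit supplied by the First Convergence Theorem \ref{thmConvergenceI}; the second inequality will follow by letting $T\to\T$ (equivalently $\delta\downarrow 0$) and invoking the Second Convergence Theorem \ref{thmConvergenceII}. Throughout I take $M=2N$ and $\tau=\tau_N=\frac{2\pi}{L(2N+1)}$, so that $L(M+1)\tau=2\pi$ and the centered-normalized cloud $\bar{Y}_N$ is exactly the one in Corollary \ref{coroCauchyTruncated}, and I work with $\Q$ coefficients, which is legitimate for every $N$ by the Remark following Theorem \ref{thmWindSize2MaxPers}.

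First I would record the simplifications from the normalization $\tech{f}(0)=0$, $\|f\|_2=1$ (note $f\not\equiv 0$, so $\max_{n\in\N}|\tech{f}(n)|>0$ is attained at a finite positive index since $\tech{f}(n)\to 0$). Under these, the quantities of Theorem \ref{thmStructure}(3) and Theorem \ref{thmWindSize2MaxPers} read
\[
\gor{r}_n=\frac{2|\tech{f}(n)|}{\|S_N f\|_2},\qquad \kappa_N=\frac{2\sqrt 2\,\|S_N f'\|_2}{\|S_N f\|_2},\qquad \frac{\sqrt 3\,\gor{r}_n}{\kappa_N}=\frac{\sqrt 3\,|\tech{f}(n)|}{\sqrt 2\,\|S_N f'\|_2}.
\]
Since $f\in C^1$, Parseval's theorem gives $\|S_N f\|_2\to 1$ and $\|S_N f'\|_2\to\|f'\|_2$, while $\max_{1\le n\le N}|\tech{f}(n)|$ increases to $\max_{n\in\N}|\tech{f}(n)|$. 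Hence the $N$-dependent admissibility threshold $\max_{1\le n\le N}\frac{\sqrt 3\,\gor{r}_n}{\kappa_N}$ converges to $\frac{\sqrt 3}{\sqrt 2\,\|f'\|_2}\max_{n\in\N}|\tech{f}(n)|$, which is precisely the upper bound imposed on $\delta$ in the hypothesis. The one genuinely delicate point is therefore to check that, because the given $\delta$ is \emph{strictly} below this limiting value, the chain $d_H(T,\T)<\delta<\max_{1\le n\le N}\frac{\sqrt 3\,\gor{r}_n}{\kappa_N}$ holds for all $N\ge N_0$; the two convergent factors move in opposite directions so the threshold need not be monotone, but convergence to a value exceeding $\delta$ is all that is required, and this is what lets Theorem \ref{thmWindSize2MaxPers} apply uniformly from some $N_0$ onward. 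I expect this admissibility verification to be the main obstacle.

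For each $N\ge N_0$, Theorem \ref{thmWindSize2MaxPers} (inequality \ref{eqMaxPersBound}) then gives
\[
mp\big(\dgm(\bar{Y}_N)\big)\ge \sqrt 3\max_{1\le n\le N}\gor{r}_n-\delta\kappa_N,
\]
whose right-hand side, by the limits above, tends to $2\sqrt 3\max_{n\in\N}|\tech{f}(n)|-2\sqrt 2\,\delta\|f'\|_2$. For the left-hand side I would use that $mp$ is $2$-Lipschitz in the Bottleneck metric: by Remark \ref{rmkMaxPersDist2Diagonal} one has $mp(\dgm)=2d_B(\dgm,\dgm_\Delta)$, so the triangle inequality (equivalently Theorem \ref{thmApproximation}(2)) yields $|mp(\dgm_1)-mp(\dgm_2)|\le 2d_B(\dgm_1,\dgm_2)$. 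Since $\dgm(\bar{Y}_N)\to\dgm_\infty(f,T,w)$ in $d_B$ by Theorem \ref{thmConvergenceI}, we get $mp(\dgm(\bar{Y}_N))\to mp(\dgm_\infty(f,T,w))$, and letting $N\to\infty$ in the display (then dividing by $2$) gives
\[
\tfrac12\,mp\big(\dgm_\infty(f,T,w)\big)\ge \sqrt 3\max_{n\in\N}|\tech{f}(n)|-\sqrt 2\,\delta\|f'\|_2,
\]
which is the first assertion.

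Finally, for the second assertion I would let $T\to\T$, so $\delta$ may be taken $\downarrow 0$. Theorem \ref{thmConvergenceII} gives $\dgm_\infty(f,T,w)\to\dgm_\infty(f,w)$ in $d_B$, and the same $2$-Lipschitz property of $mp$ yields $mp(\dgm_\infty(f,T,w))\to mp(\dgm_\infty(f,w))$; meanwhile the term $\sqrt 2\,\delta\|f'\|_2$ vanishes. Passing the previous inequality to this limit gives
\[
\tfrac12\,mp\big(\dgm_\infty(f,w)\big)\ge \sqrt 3\max_{n\in\N}|\tech{f}(n)|,
\]
which upon multiplying by $2$ is the claimed bound. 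Apart from the admissibility check for $\delta$ flagged above, every remaining step is a routine continuity-plus-Parseval computation.
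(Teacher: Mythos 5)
Your proposal is correct and follows exactly the route the paper intends: the paper offers no written-out proof, stating only that the corollary follows from inequality \ref{eqMaxPersBound} (with $\Q$ coefficients via the remark after Theorem \ref{thmWindSize2MaxPers}) together with Convergence Theorems I and II, which is precisely what you carry out. Your added care about the admissibility of $\delta$ for large $N$ and the $2$-Lipschitz continuity of $mp$ under $d_B$ fills in the details the paper leaves implicit, and both checks go through as you describe.
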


\subsection{The field of coefficients}\label{secFieldCoeff} One question worth asking is
whether the lower bound for maximum persistence presented in Theorem \ref{thmWindSize2MaxPers}, is in fact dependent on the field of coefficients.
More generally, one would like to determine if  the full persistence diagram has such dependency.
To this end, let us consider the functions
\begin{eqnarray*}
g_1(t) &=& 0.6\cos(t) + 0.8\cos(2t)\\
g_2(t) &=& 0.8\cos(t) + 0.6\cos(2t).
\end{eqnarray*}

We construct their associated sliding window point clouds, $SW_{M,\tau} g_1(T)$ and $SW_{M,\tau}g_2(T)$, using $M=4$,  $\tau = 2\pi/5$ and $T = \left\{\frac{2\pi k}{150}\;|\; k=0,1,\ldots, 150\right\}$.
After pointwise centering and normalizing, we compute their 1-dimensional persistent
homology with coefficients in $\F_2$ and $\F_3$.
For this, we use a fast implementation of 1-dimensional persistent homology, based
on the Union-Find algorithm and the work of Mischaikow and  Nanda \cite{nanda2013Morse}. Details of this implementation will appear in \cite{DPH12}.
We summarize the results  in figure \ref{figFunctVsDgm}.

\begin{figure}[htb]
\centering
\includegraphics[scale = .53]{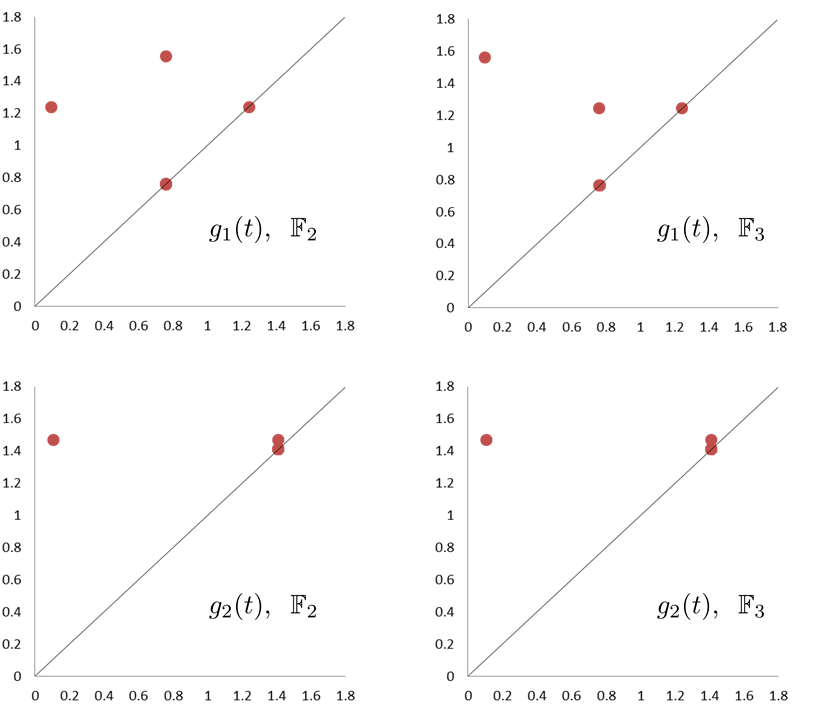}
\caption{1-dimensional $\F_p$-persistence diagrams for the centered and normalized sliding window point clouds on $g_i$. Here the columns correspond to $p =2,3$ and the rows to $i=1,2$.}
\label{figFunctVsDgm}
\end{figure}

\noindent This example shows that, at least in low dimensions, the persistent
homology of sliding window point clouds is coefficient-dependent. Let
us see why this is the case. If $(r_1,r_2) \in \R^2$ is so that
$r_1^2 + r_2^2 = 1$ and $r_1r_2 \neq 0$, it follows from the Structure Theorem \ref{thmStructure}(3) that
if $\alpha_1,\alpha_2 \in [0,2\pi]$ and
\[g(t) = r_1\cos(t - \alpha_1) + r_2 \cos(2t - \alpha_2)\] then for every
$t\in [0,2\pi]$ and $M\geq 4$ ,$\tau = \frac{2\pi}{M+1}$, one has that \[\varphi_\tau(t)= \frac{C\left(SW_{M,\tau} g(t)\right)}{\|C\left(SW_{M,\tau} g(t)\right)\|}\] can be isometrically identified with
\[\gor{\varphi}(t)= \left(r_1 e^{it}, r_2 e^{2it}\right)\in \C^2.\] Let us
use $\gor{\varphi}$ instead of $\varphi_\tau$ for the persistent
homology computation. The first
thing to notice is that the image of $\gor{\varphi}$ can be realized
as the boundary of a M\"{o}bius strip. Indeed, consider the map
\[
\begin{array}{rccl}
\mathcal{M}: & [0,\pi]\times [-1,1]& \longrightarrow & \C^2 \\
&(t,s) &\mapsto & \left(-sr_1e^{it},r_2e^{2it}\right).
\end{array}
\] It follows that $\mathcal{M}$ is a continuous injection on $[0,\pi)\times[-1,1)$, since $r_1r_2 \neq 0$, and that it descends to an embedding of the
quotient space \[\gor{\mathcal{M}} : \big([0,\pi]\times [-1,1]\big/\sim\big)\;\longrightarrow \;\C^2\] where $(0,s) \sim (\pi,-s)$ for every $s\in [-1,1]$. Notice that $[0,\pi]\times [-1,1]/\sim$
serves as the usual model for the M\"{o}bius strip, and
that $\partial \Big(Img(\gor{\mathcal{M}})\Big) = Img(\gor{\varphi})$.

Let
$T = \{t_0 < t_2 < \cdots <  t_J\}$
be a $\delta$-dense  subset of $[0,2\pi]$, $X = \gor{\varphi}(T)$, and let
\[[\nu ] \in H_1\big(R_r (X);\F_2\big)\]
for $r > 4\delta $, be the homology class of the 1-cycle
\[\nu = [\gor{\varphi}(t_0),\gor{\varphi}(t_1)] + \cdots + [\gor{\varphi}(t_{J-1}),\gor{\varphi}(t_J)] + [\gor{\varphi}(t_J),\gor{\varphi}(t_0)].\]
It can be readily checked that if we let
\[V = \left\{(t,s) \; \Big| \; (t,s)\in\big(T \cap [0,\pi)\big)\times\{-1\}\;\; \mbox{ or } \;\; (t+\pi,s)\in\big(T \cap (\pi,2\pi]\big)\times\{1\}\right\},\]
then there
exists  a triangulation of $Img(\gor{\mathcal{M}})$
having $\gor{\mathcal{M}}(V)$ as  vertex set, and so that if we
take coefficients in $\F_2$, then the formal  sum
of its triangles yields a 2-chain  $\Sigma$ with
$\partial_2 (\Sigma ) = \nu $. Moreover, since $T$ is $\delta$-dense
in $[0,2\pi]$ and for all $t\in [0,\pi]$
\begin{eqnarray*}
\left\|\gor{\mathcal{M}}(t,-1) - \gor{\mathcal{M}}(t\pm \delta,1)\right\|^2 &=& 2\big[r_1^2(1 + \cos(\delta)) + r_2^2(1-\cos(2\delta))\big] \\ \\
&\leq& 2\left[r_1^2 \left(2 - \frac{\delta^2}{2}\right) + 2r_2^2\delta^2\right] \\ \\
&=& r_1^2(4 - 5\delta^2) + 4\delta^2
\end{eqnarray*} if $\delta > 0$ is small, then we  can choose $\Sigma$
so that
\[\Sigma \in C_2\big(R_{r'}(X);\F_2\big)\;\;\;, \;\;\; r' = r_1\sqrt{4 - 5\delta^2} + 2\delta. \]
In summary, if
\begin{equation}
\label{eqBoundMobius}
r_1 \sqrt{4 - 5\delta^2} + 2\delta < \sqrt{3}\,r_2
\end{equation}
then the death-time of the class $[\nu]$  is less than or equal to
$r_1 \sqrt{4 - 5\delta^2} + 2\delta$
with coefficients in $\F_2$, but
larger than $\sqrt{3}\,r_2$ (by Theorem \ref{thmWindSize2MaxPers}) with
coefficients in $\F_p$ for any prime $p \geq 3$. Moreover,
with coefficients in $\F_2$ and provided
equation \ref{eqBoundMobius} holds (e.g. for $g_1$),
the first edge across the M\"{o}bius band
prompts the birth of a new class corresponding to the equator
\[t \mapsto \gor{\mathcal{M}}(t,0) = (0,r_2e^{2it})\]
of the embedded M\"{o}bius strip.
This class, in turn,  survives up to $\sqrt{3}r_2$.
With coefficients in $\F_3$, on the other hand, the equatorial and boundary classes will be
in the same persistence class once all the 2-simplices in the M\"{o}bius band
have been added. This results in the death of the class which was born later,
i.e. the one represented by the equator.


\section{Examples: Quantifying Periodicity of Sampled Signals}
\label{secExamples}

%

We present in this section two experiments to test our ideas:
First, the  ranking of signals by periodicity alone, in a way which is invariant to the shape of the periodic pattern;
and second, the accurate classification of  a signal as periodic or  non-periodic at different noise levels.
A detailed description of our methods is provided below, but roughly speaking, we associate to each sampled signal $S = [s_1,\ldots, s_J]$  a  real valued function $f_S$ by cubic spline interpolation, construct its  centered and normalized sliding window point cloud $X_S$, and let
\[\frac{mp\big(\dgm(X_S)\big)}{\sqrt{3}}= Score(S)\]
be its periodicity score.
We then compare it to those obtained with the JTK{\_}CYCLE \cite{Hughes2010}, Lomb-Scargle \cite{Glynn2006,Lomb1976wz,Scargle1982} and  Total Persistent Homology  \cite{CohenSteiner2010} algorithms.

\subsection{Shape Independence}
For this experiment we construct ten different shapes: A 2-periodic pure cosine-like curve, a 2-periodic cosine-like function plus three levels of gaussian noise (variances at 25\%, 50\%, and 75\%  of the signal's amplitude), a noisy saw-tooth (noise level at 25\%  of the signal's amplitude), a function of the form
$\cos(\phi(t))$
for
$\phi(t) = e^{at +b}$,
a noisy and damped cosine-like curve with three periods, a spiky signal with three periods, a noisy square wave with two periods, and a 1-periodic function of the form $Re\left(\sum\limits_{n=1}^5 \tech{f}(n)e^{2int} \right)$
for $\tech{f}(n)$ drawn randomly and uniformly from the unit disk in $\C$.
Each function is then evaluated at 50 evenly spaced time points, yielding the sampled signals $[s_1,\ldots, s_{50}]$ which we input into the algorithms.
For constructing the sliding window point clouds we use  $N=10$, coefficients in $\F_{11}$, $L=2,3,4$ and report the best score.
In all the other algorithms we set the parameters to their suggested or default values.
The results are summarized  in figure \ref{figTestCases}.

\begin{figure}[h]
\includegraphics[scale = .53]{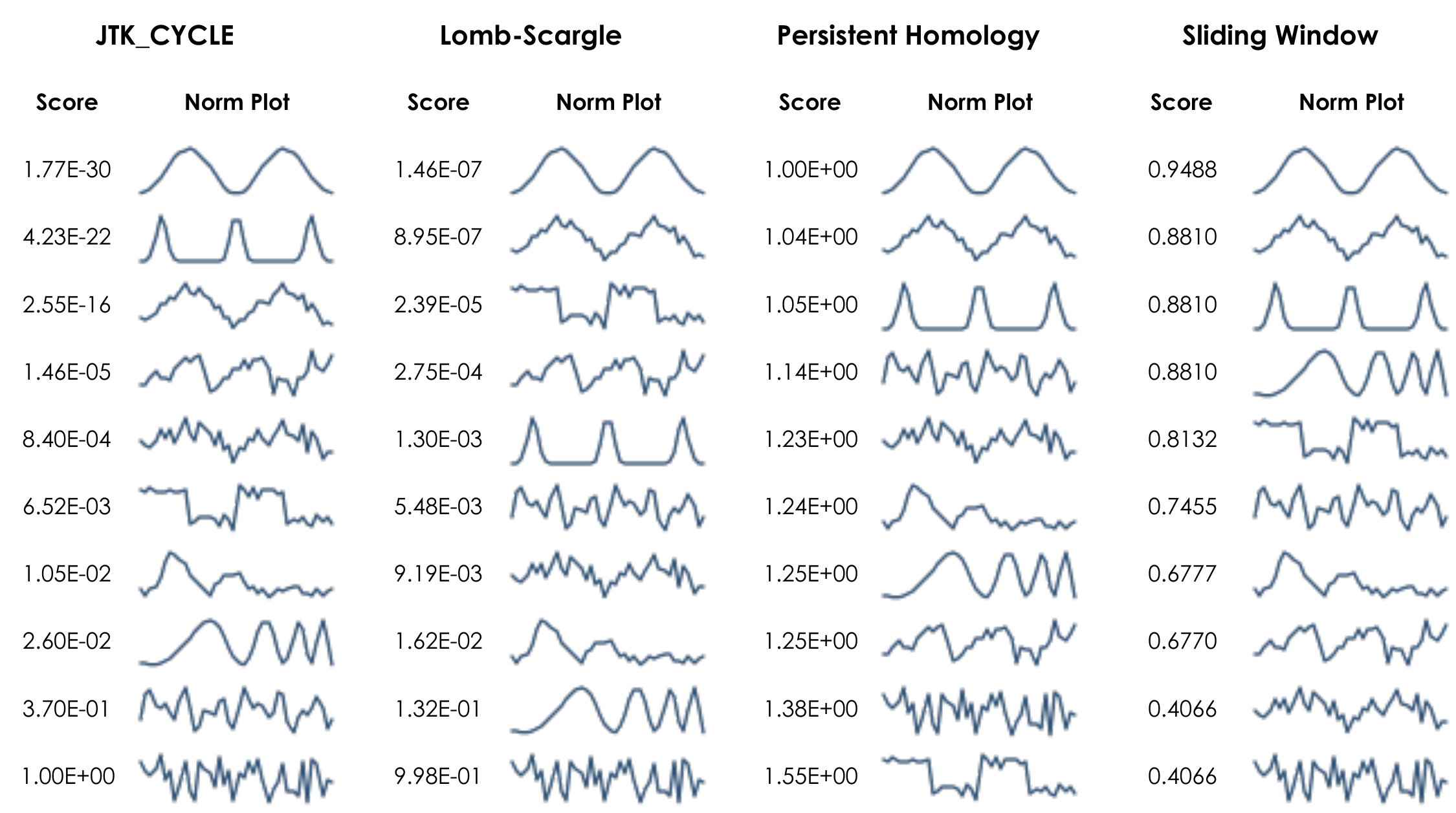}
\caption{Ranking of signals by periodicity. For each algorithm we provide the score and a normalized plot of the signal. The ranking goes from top (highest score) to bottom (lowest score).}
\label{figTestCases}
\end{figure}

Two things are worth noting: First, except for the Sliding Windows method (SW1PerS), all other algorithms have clear preferences for the type of shape they consider to be most periodic.
These biases are of course part of the wiring of the algorithms, and were to be expected.
The second thing to notice has to do with the distribution of scores and their relative differences.
Methods such as JTK or Lomb-Scargle define their periodicity score in terms of $p$-values, which are extremely difficult to interpret.
Our scoring method, by way of contrast, has a clear geometric interpretation and a reasonable distribution.

\subsection{Classification Rates}

We compare the different algorithms by their ability
to separate periodic from non-periodic signals. The performance of
this type of binary classification can be visualized using a Receiver
Operator Characteristic (ROC) plot, which compares the True Positive
Rate (TPR) to the False Positive Rate (FPR) as a cutoff on the scores
is varied.
Here the TPR is the proportion of correctly identified positive
cases out of all positives,
and
FPR is the proportion of negative cases incorrectly identified as
positives out of all the negatives.
The line TPR=FPR is the performance of random guessing; the higher
the ROC curve is above this line, the better its classification performance.
An algorithm that is able to perfectly separate all positive from
negative test cases would have a ROC curve that passes through the
point TPR=1 and FPR=0.
It follows that a reasonable measure of classification success for a particular method, is the area under its ROC curves.

The synthetic data is generated as follows:
The periodic signals (positive cases) span two periods and include a
cosine, cosine with trending, cosine with damping, and cosine with
increased peak steepness. The non-periodic signals (negative cases) include a constant and a linear function.
We generate 100 profiles from each shape by adjusting its phase.
For instance, in the case of the cosine shape we let
\[f_i(t) = \cos\left(2t - \frac{j\pi}{50}\right)\mbox{, }\;\;j=0,\ldots ,99\]
be the profiles.
We sample each of the 600 profiles at 50 evenly spaced time points $t\in [0,2\pi]$,
and add gaussian noise with standard deviation at $0\%$, $25\%$ and $50\%$ of the signal's amplitude.
Please refer to Figure \ref{figSyntheticData} for examples.

\begin{figure}[htb]
\centering
\includegraphics[scale = .42]{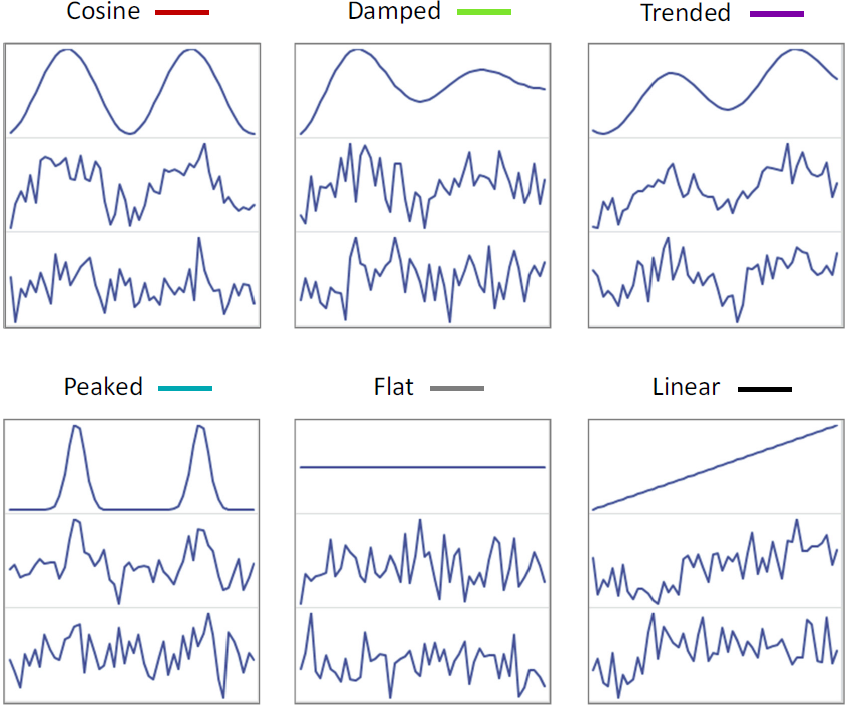}
\caption{Examples of signals in the synthetic data.
We show one signal from each profile type at noise levels 0\%, 25\% and 50\%.}
\label{figSyntheticData}
\end{figure}

\begin{remark}
There are two reason why we regard constant functions as non-periodic.
On the one hand, the intended application for SW1PerS
(Sliding Windows and 1-Persistence Scoring) is to identify
genes that are both relevant and exhibit a periodic expression pattern
 with respect to time.
Relevance in this case means that changes in expression-level translate
into physiological phenomena.
 The second reason has to do
with the philosophy of the proposed method: We quantify periodicity as
the prominence of 1-homology classes in the sliding window point cloud.
Since this point cloud for a constant function is only a point, it does not
have 1-homology and hence is interpreted as coming from a non-periodic
function.
\end{remark}

For the Sliding Windows + 1D $\F_p$-Persistence computation we let $N=10$, $L=2$ and $p=11$.
In order to address noise, we include a layer of (simple) moving average  at the
sampled signal level, and one iteration of mean-shift \cite{comaniciu2002mean} at the sliding window point cloud level.
For the moving average we fix a window size with 7 data points, and use
a cubic spline of this denoised signal to populate the point cloud.
Mean-shift on a pointwise centered and normalized point cloud $X_S$
was implemented as follows: Given a point $x\in X_S$, we let $\bar{x}$ be the
mean of the set
\[
\left\{y \in X_S :  1 - (x\cdot y) < \epsilon\right\}
\]
where $\epsilon = \cos\left(\frac{\pi}{16}\right)$ and $x\cdot y$ denotes the Euclidean inner product of $x$ and $y$.
In other words, $\bar{x}$ is the mean of the $\epsilon$-neighbors of $x$ if
distance is measured with cosine similarity.
We obtain the mean-shifted point cloud

\[\bar{X}_S = \left\{\frac{\bar{x}}{\|\bar{x}\|}\;:\; x\in X_S\right\}\]
which we now use for the persistent homology computation.
We report our results in figure \ref{figRocCurves}.

\begin{figure}[H]
\centering
\includegraphics[scale = .62]{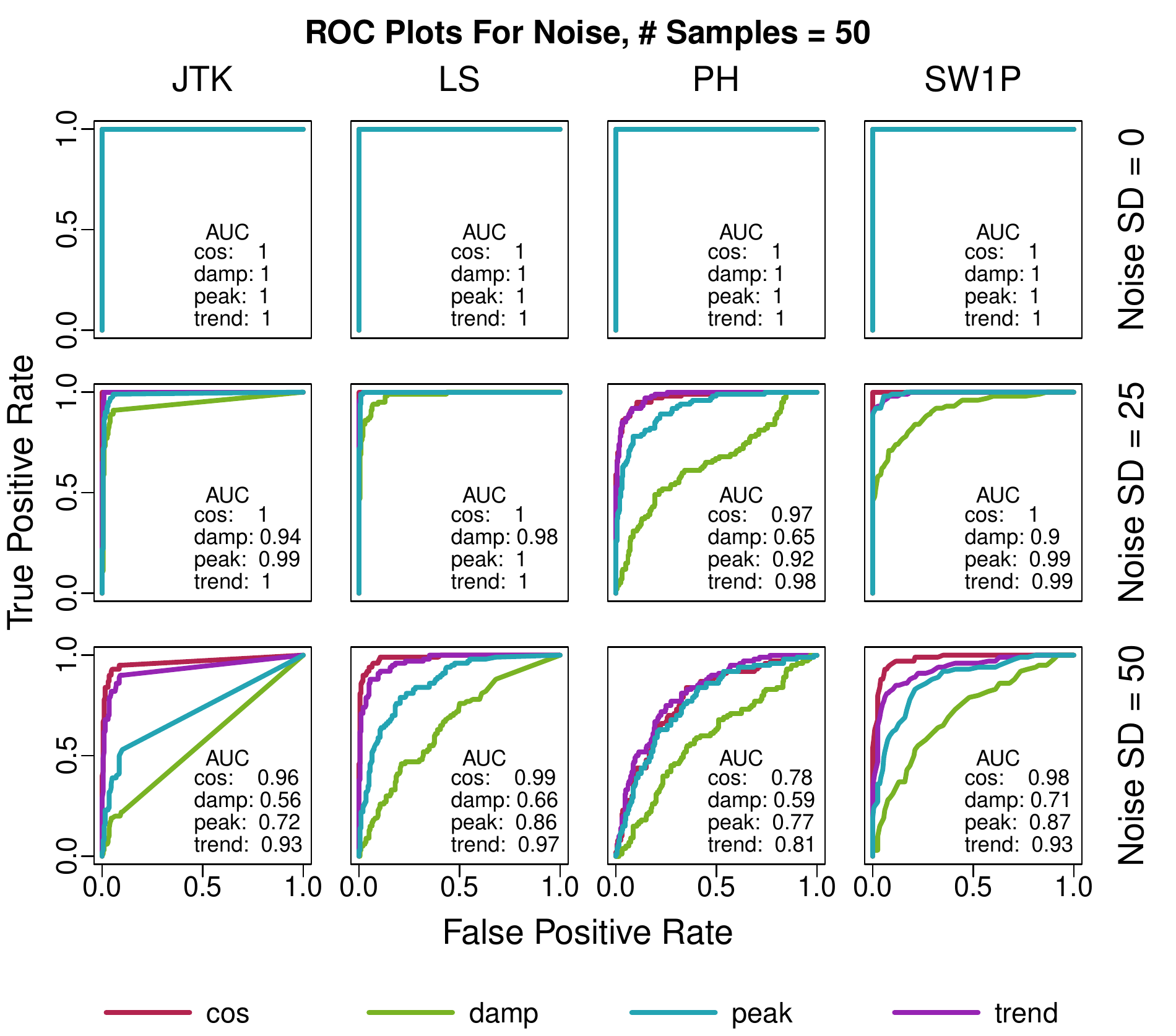}
\caption{ROC curves: True Positive Rate vs False Positive Rate. We compare the classification success of each algorithm on the synthetic data set using the area under its ROC curves. Curves are colored according to the type of periodic shape, and the area under the curve (AUC) is reported. Please refer to an electronic version for colors.}
\label{figRocCurves}
\end{figure}

The Lomb-Scargle periodogram is considered to be one of the best methods for detecting periodicity, and its  ROC curves support this belief.
The fact that it is attuned to favoring cosine-like curves makes it very resilient to dampening, trending and noise.
It was thus a great surprise to see that our method performs comparably well in all cases, except for trended cosines and cosines, and that outperforms it for peaked and damped profiles  at  high noise levels.

A final point we would like to make, is that denoising really is a crucial element
of the SW1PerS pipeline.
We show in Figure \ref{figRocCurvesNoDenoising} the results of quantifying periodicity
on the raw synthetic data, i.e. without applying denoising.
As one can see, in the absence of pre-processing, the results degrade considerably.

\begin{figure}[htb]
\centering
\includegraphics[scale = .6]{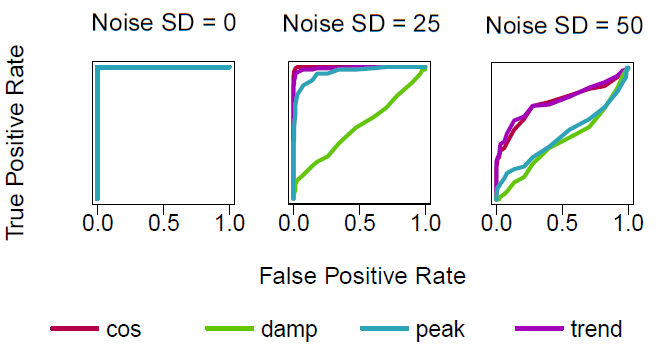}
\caption{ROC curves for the SW1PerS analysis on the raw synthetic data.
That is, we do not apply moving average or mean-shift. For a comparison
on the effect of denoising, please refer to the rightmost column in Figure \ref{figRocCurves}.}
\label{figRocCurvesNoDenoising}
\end{figure}

\section{Final Remarks}

We prove in this paper results which describe the structure of persistence diagrams
obtained by sliding window embeddings of time series.
The main tools for the analysis were a Fourier series approximation argument and
the Stability Theorem for persistence diagrams.
These results we obtained,
provide explicit information about how  diagrams from sliding window point clouds
depend on
the embedding dimension, window size and field of coefficients.
We then present examples of the effectiveness of our method for
 quantifying periodicity of time series data.
The experimental side of this framework will be explored more in depth
 in future work.

This paper also presents the first full {\em theoretical analysis} of the use of
 persistent homology to find structure in time series data.
Time delay embeddings as a means to analyze signals is not new, rather it is a
 well-established method in dynamical systems and in image analysis.
And the use of computation topology methods to find structure in transformed
data has already been considered experimentally before, notably in
\cite{carlsson2008local} and  \cite{de-silva2012Topological}.
This paper, however, is the first to provide a theoretical analysis of the dependency of
persistence on embedding dimension and window size.

There are some interesting new aspects of the use of persistence in this method.
It provides one of the first examples where persistent \textbf{homology} with
coefficients other than $\F_2$ is required.
Other notable examples include \cite{carlsson2008local}, where coefficients
in $\F_3$ were essential to discover the embedded Klein bottle,
and  \cite{de-silva2011Persistent,de-silva2012Topological} where
the authors start with a 1D persistent \textbf{cohomology} class mod
$\F_p$, and lift it to an integer 1D class to get a map to the circle.
They do this by choosing $p$ so that the relevant homomorphism in the Bockstein
long exact sequence is surjective.
Their approach for real data is to choose a prime $p$ at random, and evaluate if
 $H^2(X,\Z)$ has  $p$-torsion.
If it does, they then choose another prime.
By contrast, we have established exactly which primes could be problematic
and can avoid
them in advance.

It was highlighted in Remark \ref{rmkMaxPersDist2Diagonal} that maximum persistence,
the main feature for periodicity we study in this paper, satisfies
$mp(\dgm) = 2d_B(\dgm,\dgm_\Delta)$.
This is in fact part of a bigger picture:
Indeed, the $q$-Wasserstein distance between two persistence diagrams
$\dgm_1$ and $ \dgm_2$
is defined by
\[
W_q(\dgm_1,\dgm_2) =
\min_{\phi} \Big( \sum_{\x \in \dgm_1} ||\x - \phi(\x)||_{\infty}^q \Big)^{\frac{1}{q}}
\]
where $\phi:\dgm_1 \to \dgm_2$ is a matching of $\dgm_1$ with $\dgm_2$.
As with the Bottleneck distance (Remark \ref{rmkMaxPersDist2Diagonal}) one can show that \[W_q(\dgm, \dgm_\Delta) = \frac{1}{2}\Big( \sum_{(x,y)\in\dgm} (y-x)^q\Big)^{\frac{1}{q}}\] and since $W_q \rightarrow d_B$ as $q\to \infty$, then $2W_q(\dgm,\dgm_\Delta)$ can be regarded as a smoother version of $mp(\dgm)$.
When $\dgm$ comes from a sliding window point cloud, $2W_q(\dgm,\dgm_\Delta)$ can be interpreted as a sequence of signatures, or features, for periodicity and other phenomena at the signal level.
Here the parameter $q$ serves as the level of smoothing, and as it gets lager, the emphasis in what $W_q(\dgm,\dgm_\Delta)$ measures shifts from  topological noise and fine attributes to large topological events.

The ring of algebraic functions on persistence diagrams, as a source of features for machine learning purposes, has been recently studied by Adcock et. al.  \cite{Adcock2012Ring}.
We believe these and other signatures such as $W_q(\dgm,\dgm_\Delta)$ should uncover non-trivial signal properties  captured by their sliding window point clouds.
We have devoted this paper to exploring the use of $mp(\dgm)$, but we hope that in future work the list of useful features from persistence diagrams on sliding window point clouds can be extended.

We also mention that Section \ref{secFieldCoeff}
is the first explicit computation of the persistence diagram
of a parametrized space.
The method of Fourier Approximation presented here is
one of the first in a much needed toolbox for explicit computations of
persistence diagrams.

Our final comment is to point out that the fact that the size of the sliding window
should match the period
searched for was not obvious in advance.
Knowing this  provides powerful
 information on sampling density to scientists planning an experiment
 that looks for periodic data, and lays the ground work for the use of SW1PerS
 as a {\em filter} for time series data.

In future work we plan to establish our conjecture that $mp(\dgm)$ is maximized
by our choice of window size, and the main ingredient will be strengthening
the lower bound presented in Theorem \ref{thmWindSize2MaxPers}.
We also plan to establish the filtering properties of SW1PerS
and apply it to a variety of data, including biological data like that from
gene expression and physiology, astronomical data, and weather.
Finally, we plan to extend these methods by using other tools from Topological
Data Analysis to find structure and features in time series.

\end{document}